\documentclass[a4paper]{article}

\usepackage{amsmath}
\usepackage{amssymb}
\usepackage[english]{babel}
\usepackage{amsthm}
\usepackage[latin1]{inputenc}
\usepackage{graphicx}
\usepackage{subfigure}
\usepackage{makeidx}
\usepackage{verbatim}
\usepackage{xcolor}
\usepackage{marginnote}
\usepackage{mathrsfs}
\usepackage{hyperref}
\usepackage{tikz}
\usetikzlibrary{calc}

\newtheorem{theorem}{Theorem}
\newtheorem{lemma}{Lemma}
\newtheorem{proposition}{Proposition}
\newtheorem{corollary}{Corollary}
\newtheorem{remark}{Remark}
\newtheorem{conjecture}{Conjecture}
\theoremstyle{definition}
\newtheorem{definition}{Definition}

\def\D{\mathbb{D}}
\def\R{\mathbb{R}}
\def\Q{\mathbb{Q}}

\def\Z{\mathbb{Z}}

\begin{document}

\title{Spectral rigidity among ellipses, Bialy's conjecture and local extrema of Mather's beta function}
\author{Corentin Fierobe}
\date{}
\maketitle

\begin{abstract}
In this paper we prove Bialy's conjecture, which states that if two ellipses $\mathscr E$ and $\mathscr E'$ satisfy 
$\beta_{\mathscr E}(\rho_0)=\beta_{\mathscr E'}(\rho_0)$ and $\beta_{\mathscr E}(\rho_1)=\beta_{\mathscr E'}(\rho_1)$ 
for two distinct rotation numbers $\rho_0,\rho_1\in(0,1/2]$, then they must coincide up to isometries. 
We also prove that the same conclusion holds when only one rotation number is prescribed, provided the two ellipses have the same perimeter. 
Finally, we discuss consequences for local extremizers of Mather's beta function, building on a recent result of Baranzini, Bialy, and Sorrentino.
\end{abstract}

\section{Introduction}

In the title of a seminal 1966 paper \cite{Kac}, Kac asked the question \textit{"Can one hear the shape of a drum?"} which has since become famous. It asks whether the shape of a domain in $\R^d$ is uniquely determined by the spectrum of the Laplacian inside the domain with Dirichlet or Neumann boundary conditions.

This question found a similar formulation in billiards dynamics, where one studies the movement of a particle inside a domain $\Omega$, bouncing off its boundary according to the law of reflection \textit{angle of incidence = angle of reflection}. The spectrum of the Laplacian is replaced by the closure of the set of perimeters of periodic orbits, or \textit{length spectrum} $\mathscr L(\Omega)$, and one asks whether it determines the domain. Andersson and Melrose \cite{AM} proved that the two questions are related for generic strictly convex domains.

In the planar case, the question admits another, non-equivalent formulation by imposing a \textit{marking}. More precisely, given a strictly convex domain $\Omega\subset\R^2$ and a rational number $p/q\in[0,1/2]$, one can define
\[
\beta_{\Omega}\left(\frac{p}{q}\right) = -\frac{1}{q}L_{p/q}  
\]
where $L_{p/q}$ is the maximal perimeter of an orbit of rotation number $p/q$, where $q$ is the number of bounces and $p$ is the winding number of the orbit. The map $\beta_{\Omega}$, which extends to a convex map of the interval $[0,1/2]$ is called Mather beta function and has many interesting properties relating the length spectrum to billiard's dynamics. 

The question whether $\mathscr L(\Omega)$ or $\beta_{\Omega}$ alone allow recovering a strictly convex planar domain remains open, although some partial results are known. 
Generalizing \cite{MelroseIsospectral, OsgoodPhillipsSarnak1, OsgoodPhillipsSarnak2, OsgoodPhillipsSarnak3} to the length spectrum, \cite{Vig} proved that the sets of isospectral domains are compact in the $\mathscr C^{\infty}$-topology.
\cite{Siburg, SorDCDS} showed that Mather's beta function, resp. its Taylor coefficients, allow one to recognize disks from any other domain. In fact, one needs only the data of the domain's perimeter and the value of $\beta_{\Omega}$ at a generic rotation number; see \cite{BBS}. 
Recent developments towards the proof of Birkhoff's conjecture, see \cite{BialyMironov, KS} and in particular \cite[Theorem 2.3]{Bialyellipses}, have shown that \textit{one can recognize ellipses from any other centrally symmetric strictly convex domain with $\mathscr C^2$-smooth boundary using the values of $\beta_{\Omega}$ at rotation numbers in $(0,1/4]$.} In particular, the result holds among ellipses, but the proof works with the knowledge of $\beta_{\Omega}$ at an infinite family of rational rotation numbers. 

This problem can also be formulated in terms of deformations and has been studied in several works; see, for example, \cite{DKW, FKS_ellipse, HezariZelditch, HZ2}. We emphasize that the variational techniques developed in \cite{FKS_ellipse} for the study of Mather beta function inspired the results of the present paper. We refer to \cite{FKS_lecturenotes} for a recent overview of these problems.
\vspace{0.2cm}

In this paper, we are interested in studying the question of recovering $\Omega$ from the knowledge of $\beta_{\Omega}(\rho)$ on a \textit{finite} set of rotation numbers. This seems impossible to hold for general domains, hence we investigate this problem either among ellipses, or locally near a fixed general domain. This problem was recently investigated in two different works \cite{BBS, Bialyellipses}, and this paper addresses questions arising from these works in the two cases previously mentioned.

In \cite{Bialyellipses}, the author formulates an explicit expression of Mather's beta function associated to an ellipse, and provides remarkable computations that describe fully the billiard dynamics in an ellipse. He also proves using a geometric argument that two ellipses $\mathscr E$ and $\mathscr E'$ satisfying
\[
\beta_{\mathscr E}\left(\frac{p}{q}\right) = 
\beta_{\mathscr E'}\left(\frac{p}{q}\right)
\quad\text{and}\quad
\beta_{\mathscr E}\left(\frac{1}{2}\right) = 
\beta_{\mathscr E'}\left(\frac{1}{2}\right)
\]
for a given rational rotation number $p/q\in(0,1/2)$, then $\mathscr E$ and $\mathscr E'$ must be the same. He conjectures that the result holds if one replaces $p/q$ and $1/2$ by any two distinct rotation numbers:

\begin{conjecture}[Bialy \cite{Bialyellipses}]
Let $\rho_0$ and $\rho_1$ be two distinct rotation numbers in $(0,1/2]$. Two ellipses $\mathscr E$ and $\mathscr E'$ satisfying
\[
\beta_{\mathscr E}\left(\rho_0\right) = 
\beta_{\mathscr E'}\left(\rho_0\right)
\quad\text{and}\quad
\beta_{\mathscr E}\left(\rho_1\right) = 
\beta_{\mathscr E'}\left(\rho_1\right)
\]
are the same up to isometries.
\end{conjecture}

We will prove Bialy's conjecture:

\begin{theorem}
\label{thm:main_bialy_conj}
    Let $\rho_0,\rho_1\in(0,1/2]$ be distinct. Assume that $\mathscr E$ and $\mathscr E'$ are two ellipses satisfying
    \[
    \beta_{\mathscr E}(\rho_0) = \beta_{\mathscr E'}(\rho_0)
    \qquad\text{and}\qquad
    \beta_{\mathscr E}(\rho_1) = \beta_{\mathscr E'}(\rho_1).
    \]
    Then $\mathscr E=\mathscr E'$ up to isometries.
\end{theorem}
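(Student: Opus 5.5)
The plan is to reduce the problem to a one-parameter monotonicity statement. Up to isometries an ellipse is determined by its semi-axes $a\ge b>0$. Mather's beta function is positively homogeneous of degree one under dilations: $\beta_{\lambda\mathscr E}(\rho)=\lambda\beta_{\mathscr E}(\rho)$, because all orbit lengths scale by $\lambda$. Normalize $a=1$ and write $\mathscr E_b$ for the ellipse with semi-axes $1$ and $b\in(0,1]$. Since $\beta<0$ on $(0,1/2]$, the scale-invariant quantity is
\[
R_{\rho_0,\rho_1}(b)=\frac{\beta_{\mathscr E_b}(\rho_0)}{\beta_{\mathscr E_b}(\rho_1)} .
\]
If $\mathscr E=\lambda\mathscr E_b$ and $\mathscr E'=\lambda'\mathscr E_{b'}$ satisfy the hypotheses, then $R_{\rho_0,\rho_1}(b)=R_{\rho_0,\rho_1}(b')$. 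So it suffices to prove that $b\mapsto R_{\rho_0,\rho_1}(b)$ is injective on $(0,1]$, and I will aim for strict monotonicity. Once $b=b'$ is known, either equality $\lambda\beta_{\mathscr E_b}(\rho_0)=\lambda'\beta_{\mathscr E_b}(\rho_0)$ forces $\lambda=\lambda'$, and we are done.

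To compute $\frac{d}{db}\log|\beta_{\mathscr E_b}(\rho)|$ I will use a first-variation argument in the spirit of the variational techniques of \cite{FKS_ellipse}. For rational $\rho=p/q$, the maximal orbits in an ellipse fill a whole caustic invariant curve, and $L_{p/q}$ is a critical value of the length functional. Hence under the deformation $(\cos t,\sin t)\mapsto(\cos t,b\sin t)$, with normal velocity $\nu(t)$, we get
\[
\frac{d}{db}L_{p/q}=\sum_{i=1}^q 2\sin\varphi_i\,\nu(t_i).
\]
Averaging over the invariant circle turns this into an integral against the invariant measure $\mu_\rho$ of the caustic:
\[
\frac{d}{db}\beta_{\mathscr E_b}(\rho)=-\int 2\sin\varphi\,\nu\,d\mu_\rho .
\]
By density of rationals and continuity of $\beta$ in $\rho$ and of the integral, this extends to irrational $\rho$. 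I will combine it with Euler's relation $a\partial_aL+b\partial_bL=L$, which follows from homogeneity. This expresses $\frac{d\log|\beta(\rho)|}{d\log b}$ as a ratio $G(\rho,b)\in[0,1]$, namely the proportion of the averaged quantity $\int 2\sin\varphi\,\langle x,n\rangle\,d\mu_\rho$ that comes from the $y$-component. Billiard dynamics in the ellipse are explicit via elliptic coordinates and confocal caustics, following \cite{Bialyellipses}, so $\mu_\rho$ and the integrand can be written with elliptic integrals in the caustic parameter.

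Then
\[
\frac{d}{d\log b}\log R_{\rho_0,\rho_1}=G(\rho_0,b)-G(\rho_1,b),
\]
so injectivity follows if $\rho\mapsto G(\rho,b)$ is strictly monotone for each fixed $b<1$. At $b=1$ (the disk) $G$ is constant, but that is a single endpoint and does not spoil injectivity. The boundary behaviour supports monotonicity. At $\rho=1/2$ the maximal orbit is the major axis, which has no $y$-component, so $G=0$. As $\rho\to0$ the orbits hug the boundary and $G$ tends to the $b$-share of the perimeter, which is positive. So I expect $G$ to be strictly decreasing in $\rho$.

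Proving this monotonicity is the main obstacle. My first attempt will be to reparametrize by the confocal caustic parameter $\lambda$, which is monotone in $\rho$, and write $G$ as a ratio of two elliptic-integral averages in $\lambda$. I will then show that its $\lambda$-derivative has a sign, either through a Chebyshev-type correlation inequality for the measures $\mu_\lambda$, or by comparing the angular densities of $\mu_\lambda$, which concentrate toward the major-axis vertices as $\lambda$ varies toward the $\rho=1/2$ end. If the direct computation is too heavy, a fallback is to prove that the curves $b\mapsto(\beta(\rho_0),\beta(\rho_1))$ are transverse to rays through the origin via a convexity property of $\beta$ in the pair $(a,b)$.
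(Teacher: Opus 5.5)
Your reduction is correct and is in substance the paper's own route. The paper fixes the level set $\beta_{\mathscr E}(\rho_0)=c$, parametrizes it by eccentricity and shows that $e\mapsto\beta_{\mathscr E_e}(\rho_1)$ is strictly monotone. You instead quotient by scaling through $R_{\rho_0,\rho_1}$. Both reduce to the same inequality between two normalized averages, made explicit below.

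\textbf{The gap.} The step that carries all the content, strict monotonicity of $\rho\mapsto G(\rho,b)$, is only expected, not proved. The endpoint values $G(1/2,b)=0$ and $\lim_{\rho\to0}G(\rho,b)>0$ say nothing about the interior. Injectivity of $R_{\rho_0,\rho_1}$ needs $G(\rho_0,b)\neq G(\rho_1,b)$ for every $b\in(0,1)$ and every pair $\rho_0\neq\rho_1$. A correlation argument only goes through once the weight is explicit, and your proposal does not supply that form.

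\textbf{The missing ingredient.} You need the explicit density of $\sin\varphi\,d\mu_\rho$ in the outward-normal angle $\psi$ at the bounce point (your $\varphi$ is the paper's $\delta$). In this variable $h(\psi)=a\sqrt{1-e^2\sin^2\psi}$, $xn_x=a^2\cos^2\psi/h$ and $yn_y=b^2\sin^2\psi/h$. Joachimsthal's relation gives $\sin\varphi=J_\rho h(\psi)$ with $J_\rho=\sqrt{\lambda}/(ab)$, and the invariant density from \cite{Bialyellipses} is $d\mu_\rho\propto d\psi/(\sin\varphi\cos\varphi)$. Together these give $\sin\varphi\,d\mu_\rho\propto d\psi/\cos\varphi$, with $\cos\varphi=\sqrt{1-\lambda/b^2}\,\sqrt{1+k_\rho^2\sin^2\psi}$ and $k_\rho^2=\lambda(a^2-b^2)/(a^2(b^2-\lambda))$.

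Hence $G(\rho,b)=b^2U_\rho/(a^2V_\rho+b^2U_\rho)$, where $U_\rho,V_\rho$ are the integrals of $\sin^2\psi$ and $\cos^2\psi$ against $f_\rho=[(1-e^2\sin^2\psi)(1+k_\rho^2\sin^2\psi)]^{-1/2}$. So $G$ increases with $U_\rho/V_\rho$, and $\rho$ enters only through $k_\rho$. Since $k_\rho$ is strictly increasing in $\lambda$, it is strictly increasing in $\rho$.

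Set $A_\rho(F)=\int_0^{\pi/2}F f_\rho\,d\psi$ and $g=f_{\rho_1}/f_{\rho_0}=\sqrt{(1+k_{\rho_0}^2\sin^2\psi)/(1+k_{\rho_1}^2\sin^2\psi)}$. When $k_{\rho_0}\neq k_{\rho_1}$, $g$ is strictly monotone on $[0,\pi/2]$, and one has
\[
A_{\rho_0}(1)A_{\rho_1}(\sin^2)-A_{\rho_0}(\sin^2)A_{\rho_1}(1)=\frac12\int_0^{\pi/2}\!\!\int_0^{\pi/2}(\sin^2y-\sin^2x)\bigl(g(y)-g(x)\bigr)f_{\rho_0}(x)f_{\rho_0}(y)\,dx\,dy .
\]
The integrand has a strict constant sign off the diagonal. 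This is exactly the Chebyshev-type correlation inequality you anticipated, and it is how the paper concludes. It also confirms that $G$ decreases in $\rho$, and gives $G(\rho,b)>0=G(1/2,b)$ for $\rho<1/2$.

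\textbf{A smaller point.} Continuity of $\beta$ in $\rho$ does not by itself carry your derivative formula from rational to irrational $\rho$. You need the right-hand side to be jointly continuous in $(\rho,b)$, so you can pass to the limit in the integrated identity. Alternatively, as the paper does, apply the first variation directly to the action-parametrized caustic curve, which exists for every $\rho\in(0,1/2)$.
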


The proof shows that given $\rho_0\in(0,1/2]$ and an ellipse $\mathscr E$, there is an analytic one-parameter family of ellipses $(\mathscr E_e)_{e\in[0,1)}$ such that each ellipse $\mathscr E_e$ is the only ellipse of eccentricity $e$ satisfying $\beta_{\mathscr E_e}(\rho_0)=\beta_{\mathscr E}(\rho_0)$ up to isometries. Then it shows that for any other $\rho_1\neq\rho_0$, the map 
\[
e\in[0,1)\mapsto\beta_{\mathscr E_e}(\rho_1)
\]
is strictly monotone.\\

The same techniques allow us to show a similar result which can be described as follows. Given $p>0$ we consider the family of ellipses $(\mathscr E_e)_{e\in[0,1]}$ such that $\mathscr E_e$ is the unique ellipse of perimeter $p$ and eccentricity $e$ up to isometries. The case $e=0$ corresponds to a disk, and the case $e=1$ corresponds to the degenerate case of a flat ellipse. We prove the following result:

\begin{theorem}
\label{thm:main_cst_perimeter}
    Given $\rho\in(0,1/2]$, the map 
    \begin{equation}
    \label{eq:map_beta_ecc}
    e\in[0,1]\mapsto\beta_{\Omega_e}(\rho)
    \end{equation}
    is strictly decreasing.
\end{theorem}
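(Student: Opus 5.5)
The plan is to reduce the statement to the monotonicity of a scale-invariant quantity and then prove that monotonicity variationally. Since the ellipses $\Omega_e$ have fixed perimeter $p$, I write $\Omega_e=\lambda(e)\,\mathscr E^{(e)}$, where $\mathscr E^{(e)}$ is a fixed normalized ellipse of eccentricity $e$ (semi-axes $1$ and $\sqrt{1-e^2}$) and $\lambda(e)=p/|\partial\mathscr E^{(e)}|$. Mather's beta function is homogeneous of degree one under dilations, so
\[
\beta_{\Omega_e}(\rho)=-p\,\frac{L_\rho(\mathscr E^{(e)})}{q\,|\partial\mathscr E^{(e)}|}\qquad(\rho=p'/q),
\]
and Theorem~\ref{thm:main_cst_perimeter} is equivalent to the following claim: the ratio $R_\rho(e)=L_\rho/(q\cdot\text{perimeter})$ is strictly \emph{increasing} in $e$. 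By continuity and convexity of $\beta$ in $\rho$, it suffices to prove this for rational $\rho$ and then pass to irrational $\rho$ by approximation. Strictness for irrational $\rho$ would follow from a derivative bound that is uniform in $\rho$, or from analyticity in $e$.

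As a sanity check on the sign, I compare the endpoints. For the disk, $R_\rho(0)=\sin(\pi\rho)/\pi$. For the flat ellipse ($e\to1$) of half-length $a$, the perimeter tends to $4a$ and a $p'/q$ orbit bouncing between the two ends has length tending to $4ap'$, so $R_\rho(1)=\rho>\sin(\pi\rho)/\pi$. At $\rho=1/2$ everything is explicit: $L_{1/2}=4a$, and $R_{1/2}=a/\text{perimeter}$ increases from $1/(2\pi)$ to $1/4$. This is classical, since the perimeter of an ellipse with $a=1$ is a strictly decreasing function of $b$, while $e$ decreases in $b$.

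For the derivative in $e$, I deform $\mathscr E^{(e)}$ through affine stretches and use the first variation formula for the maximal length, as in the variational techniques of \cite{FKS_ellipse}. Under a normal boundary perturbation $h$, every maximal $\rho$-orbit is a critical point. Moreover, in an ellipse the maximal $\rho$-orbits form a whole Poncelet family tangent to one confocal caustic, all of the same length. Hence
\[
\delta L_\rho=2\sum_{i=1}^q \sin\varphi_i\,h(x_i)=2q\int_{\partial\mathscr E}\sin\varphi\, h\, d\mu_\rho ,
\]
where $\mu_\rho$ is the normalized invariant measure on the boundary associated with the caustic. The formula is the same for every orbit of the family, so I may average it against $\mu_\rho$. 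The perimeter variation is $\delta|\partial\mathscr E|=\int h\,ds$. Thus
\[
\frac{d}{de}\log R_\rho=\frac{2\int\sin\varphi\,h\,d\mu_\rho}{L_\rho/q}-\frac{\int h\,ds}{|\partial\mathscr E|}.
\]
In elliptic coordinates $\sin\varphi$ and $d\mu_\rho$ are explicit. With the Chasles/Joachimsthal integral, $d\mu_\rho\propto ds/(\sin\varphi\cdot\text{const})$ up to the curvature weight, and $L_\rho/q=2\int\sin\varphi\,d\mu_\rho$. The sign of the derivative therefore becomes a correlation inequality: the covariance under $\mu_\rho$ of $\sin\varphi$ and $h$, compared with the $ds$-average of $h$.

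I choose the stretch so that $h$ is largest at the major vertices, which is where $d\mu_\rho$ concentrates relative to $ds$. The expected outcome is that $\mu_\rho$ puts more mass than arc length does where $h$ is large, which makes the derivative positive. This is consistent with the endpoint comparison above: $R_\rho$ increases from $\sin(\pi\rho)/\pi$ to $\rho$.

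The main obstacle is verifying this sign rigorously for all $e\in(0,1)$ and all $\rho$. What I would try first is to write both integrals as complete elliptic integrals in the modulus $k$ of the caustic, with $k=k(e,\rho)$ fixed by the rotation-number condition $F(\cdot,k)/K(k)=2\rho$. I would then reduce positivity to a one-variable Chebyshev-type inequality, using that both weights are monotone in the same angular variable on a quarter of the ellipse. If that fails, the fallback is Bialy's explicit expression for $\beta_{\mathscr E}$ from \cite{Bialyellipses}, differentiated directly along $p=\text{const}$, with the endpoint values above as a consistency check on the sign.
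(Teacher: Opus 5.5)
\textbf{The gap.} For $\rho\in(0,1/2)$, the whole content of the theorem is that $\frac{d}{de}R_\rho(e)$ never vanishes (or has a fixed sign) on $(0,1)$. Your proposal leaves exactly this as ``the main obstacle''. Neither the endpoint comparison $\sin(\pi\rho)/\pi<\rho$ nor the heuristic that $\mu_\rho$ concentrates near the major vertices rules out interior critical points or non-monotone behaviour.

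\textbf{The variational formula you would sign is also wrong.} Write $\dot h$ for your normal displacement and keep $h$ for the support function.
\begin{itemize}
\item Under $\dot h$ the perimeter changes by $\int\dot h\,\kappa\,ds=\int_0^{2\pi}\dot h(\psi)\,d\psi$, by Cauchy's formula $|\partial\Omega|=\int_0^{2\pi}h\,d\psi$. Your $\int\dot h\,ds$ is the variation of the \emph{area}.
\item By Lemma~\ref{lemma:Mather_beta}, $L_\rho/q=2\int h\sin\varphi\,d\mu_\rho$. Your $2\int\sin\varphi\,d\mu_\rho$ is not even a length.
\item The invariant measure is $d\mu_\rho\propto d\psi/(\sin\varphi\cos\varphi)$, where $\psi$ is the normal angle and your $\varphi$ is the paper's $\delta$.
\end{itemize}
So the correlation inequality ``$\mu_\rho$ versus arc length'' compares against the wrong measure: the correct reference measure is $d\psi=\kappa\,ds$, not $ds$.

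A minor slip: at $\rho=1/2$ one has $R_{1/2}=2a/|\partial\mathscr E|$, which goes from $1/\pi$ to $1/2$, not from $1/(2\pi)$ to $1/4$. Your monotonicity argument for that case is fine.

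\textbf{How the paper closes the gap.} Work in $\psi$ and use the Joachimsthal relation $\sin\varphi=J_\rho h(\psi)$. This gives
$\sin\varphi\,d\mu_\rho\propto d\psi/\cos\varphi\propto d\psi/\sqrt{1+k_{e,\rho}^2\sin^2\psi}$.
Combine this with $\dot h=(aa'\cos^2\psi+bb'\sin^2\psi)/h$ and $h=a\sqrt{1-e^2\sin^2\psi}$ (Proposition~\ref{prop:derivative_beta}). Set $f_0=(1-e^2\sin^2\psi)^{-1/2}$ and $f_1=f_0(1+k_{e,\rho}^2\sin^2\psi)^{-1/2}$. Then:
\begin{itemize}
\item the perimeter constraint reads $\int_0^{2\pi}(aa'\cos^2\psi+bb'\sin^2\psi)f_0\,d\psi=0$;
\item the derivative satisfies $\frac{d}{de}\beta_{\Omega_e}(\rho)\propto\int_0^{2\pi}(aa'\cos^2\psi+bb'\sin^2\psi)f_1\,d\psi$.
\end{itemize}
Eliminating $(aa',bb')$ turns the derivative into
$\iint_{[0,\pi/2]^2}(\sin^2y-\sin^2x)\bigl(\tfrac{f_1}{f_0}(y)-\tfrac{f_1}{f_0}(x)\bigr)f_0(x)f_0(y)\,dx\,dy$.
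The integrand has constant sign. It is not identically zero, because $f_1/f_0$ is strictly monotone on $[0,\pi/2]$ when $k_{e,\rho}\neq0$. Hence the derivative never vanishes, and Theorem~\ref{thm:bbs} (the disk is the maximizer) fixes the direction.

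\textbf{Completing your own route.} In your normalization $a\equiv1$ the sign can be read off directly, without Theorem~\ref{thm:bbs}. There $\frac{d}{de}\log R_\rho$ is a positive multiple of the covariance, with respect to $h\,d\psi$, of $\dot h/h=-e\sin^2\psi/(1-e^2\sin^2\psi)$ and $1/\cos\varphi$. Both are strictly decreasing in $\sin^2\psi$ on $[0,\pi/2]$, so Chebyshev's inequality makes the covariance positive.

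\textbf{Irrational $\rho$.} Every ellipse has an action-parametrized caustic for every $\rho\in(0,1/2)$, so this argument covers irrational $\rho$ directly. Your rational-approximation step is unnecessary, and on its own it would only give non-strict monotonicity.
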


In fact, if $\rho=0$ the map \eqref{eq:map_beta_ecc} is identically equal to $0$, hence we need to exclude it. We deduce immediately the following:

\begin{corollary}
\label{cor:main_cst_perimeter}
    Let $\rho\in(0,1/2]$. Assume that $\mathscr E$ and $\mathscr E'$ are two ellipses with the same perimeter such that
    \[
    \beta_{\mathscr E}(\rho) = \beta_{\mathscr E'}(\rho).
    \]
    Then $\mathscr E=\mathscr E'$ up to isometries.
\end{corollary}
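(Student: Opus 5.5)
The corollary is a direct consequence of Theorem~\ref{thm:main_cst_perimeter}; the plan is to reduce it to the injectivity of the map \eqref{eq:map_beta_ecc}.

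First, note that an ellipse is determined up to isometries by its two semi-axes $a\ge b>0$. Equivalently, it is determined by its eccentricity $e=\sqrt{1-b^2/a^2}\in[0,1)$ together with a scale factor. The perimeter is $p = a\,P(e)$, where $P(e)=4\int_0^{\pi/2}\sqrt{1-e^2\sin^2 t}\,dt>0$ is a complete elliptic integral. So for fixed perimeter $p$, the scale is $a=p/P(e)$, and each $e\in[0,1)$ yields exactly one ellipse up to isometries. This is the family $(\Omega_e)_{e\in[0,1]}$ used in Theorem~\ref{thm:main_cst_perimeter}, which I read as the same family denoted $\mathscr E_e$ just before that statement.

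Now let $\mathscr E,\mathscr E'$ have common perimeter $p$ and eccentricities $e,e'$. By the previous paragraph, $\mathscr E=\Omega_e$ and $\mathscr E'=\Omega_{e'}$ up to isometries. Since $\beta$ is defined through maximal perimeters of periodic orbits (and extended by convexity), it is invariant under isometries. The hypothesis therefore reads $\beta_{\Omega_e}(\rho)=\beta_{\Omega_{e'}}(\rho)$ with $\rho\in(0,1/2]$.

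Theorem~\ref{thm:main_cst_perimeter} says the map $e\mapsto\beta_{\Omega_e}(\rho)$ is strictly decreasing, hence injective. It follows that $e=e'$, so $\Omega_e=\Omega_{e'}$ and $\mathscr E=\mathscr E'$ up to isometries.

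There is no real obstacle. The only step needing care is the parametrization: showing that perimeter and eccentricity together determine the ellipse up to isometry, which comes from the explicit scaling $a=p/P(e)$. The degenerate value $e=1$ never occurs for a genuine ellipse, so it plays no role.
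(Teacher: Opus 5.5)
Your proposal is correct and follows the paper's route: the paper deduces the corollary immediately from Theorem~\ref{thm:main_cst_perimeter}, using that at fixed perimeter an ellipse is determined up to isometry by its eccentricity. Strict monotonicity of $e\mapsto\beta_{\Omega_e}(\rho)$ then forces $e=e'$, as you argue; your explicit scaling $a=p/P(e)$ just spells out the parametrization the paper takes for granted.
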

\vspace{0.2cm}

Previous results can be interpreted in terms of the local maxima of Mather's beta function. Before giving an explicit formulation for this interpretation, let us recall a recent result by Bialy, Baranzini and Sorrentino, see \cite[Theorem 4.1]{BBS}, stating that for any rotation number $\rho$ outside a set of zero measure, one can characterize a given domain $\Omega$ as a disk uusing only the data of its perimeter $\partial\Omega$ and $\beta_{\Omega}$. More precisely, the result can be stated as follows, where we introduce $\D$ as the unit disk:

\begin{theorem}[Bialy-Baranzini-Sorrentino \cite{BBS}]
\label{thm:bbs}
Let $\Omega\subset\R^2$ be a strictly convex domain. Then
\begin{equation}
    \label{eq:inequality_disk}
    \beta_{\Omega}(\rho) \leq \frac{|\partial\Omega|}{2\pi}\beta_{\D}(\rho),\qquad \forall\rho\in\left[0,\frac{1}{2}\right].
\end{equation}
If the inequality in \eqref{eq:inequality_disk} is an equality at a certain $\rho$, then the billiard in $\Omega$ has an invariant curve of constant angle associated to $\rho$. Moreover, there is a set $\mathscr R\subset[0,1/2]$ containing $\Q\cap(0,1/2)$ whose complement is dense and of zero measure, such that if equality in \eqref{eq:inequality_disk} holds at a $\rho\in\mathscr R$, then $\Omega$ is a disk.
\end{theorem}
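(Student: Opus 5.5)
The plan is to bound $L_{p/q}$ from below by averaging the perimeters of an explicit family of inscribed $q$-gons, and then to read off the equality case from the equality case of each inequality used.

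\textbf{Setup and reduction.} Since $\beta_\Omega$ is continuous (it is convex on $[0,1/2]$), it is enough to prove \eqref{eq:inequality_disk} for rational $\rho=p/q$ and pass to the limit. For the unit disk an orbit of rotation number $p/q$ is a regular star polygon, so $\beta_{\D}(p/q)=-2\sin(\pi p/q)$. The inequality therefore reads
\[
L_{p/q}\ \geq\ \frac{q\,|\partial\Omega|}{\pi}\sin\left(\frac{\pi p}{q}\right).
\]
I will parametrize $\partial\Omega$ by the tangent angle $\varphi\in\R/2\pi\Z$, writing $\gamma'(\varphi)=r(\varphi)e(\varphi)$, where $e(\varphi)=(\cos\varphi,\sin\varphi)$ and $r>0$ is the radius of curvature. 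Then $|\partial\Omega|=\int_0^{2\pi}r$. I also use Birkhoff's variational characterization: $L_{p/q}$ is the maximum of the perimeter over all cyclically ordered $q$-gons inscribed in $\partial\Omega$ with winding number $p$.

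\textbf{Averaging argument.} Set $\alpha=2\pi p/q$. For each $\theta$, take the inscribed polygon with vertices $\gamma(\theta+k\alpha)$ for $k=0,\dots,q-1$. It has winding number $p$, so its perimeter is at most $L_{p/q}$. I bound each side from below by its projection onto the mean tangent direction:
\[
|\gamma(\varphi+\alpha)-\gamma(\varphi)|\ \geq\ \langle \gamma(\varphi+\alpha)-\gamma(\varphi),\,e(\varphi+\alpha/2)\rangle=\int_0^\alpha r(\varphi+t)\cos(t-\alpha/2)\,dt .
\]
Integrating over $\varphi\in[0,2\pi)$ and using Fubini, the right-hand side integrates to $|\partial\Omega|\int_0^\alpha\cos(t-\alpha/2)\,dt=2|\partial\Omega|\sin(\alpha/2)$. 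Averaging the polygon perimeters over $\theta$ then gives
\[
L_{p/q}\ \geq\ \frac{1}{2\pi}\int_0^{2\pi}\sum_{k=0}^{q-1}|\gamma(\theta+(k+1)\alpha)-\gamma(\theta+k\alpha)|\,d\theta\ \geq\ \frac{q}{2\pi}\cdot 2|\partial\Omega|\sin(\pi p/q),
\]
which is the desired inequality.

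\textbf{Equality case.} Suppose equality holds at $\rho$; take $\rho$ rational first. Then, by continuity in $\varphi$, every chord $[\gamma(\varphi),\gamma(\varphi+\alpha)]$ is parallel to $e(\varphi+\alpha/2)$. Hence it makes the angle $\delta:=\alpha/2=\pi\rho$ with the boundary at both endpoints. So these chords are billiard trajectories of constant reflection angle, and they form an invariant curve of constant angle $\delta$. For irrational $\rho$, I would approximate by rationals and use that the defect in the averaged inequality is a continuous, nonnegative functional of $\alpha$; I expect this to need some care.

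\textbf{Rigidity (the main obstacle).} Parallelism of the chords is equivalent to
\[
\int_0^{2\delta} r(\varphi+t)\sin(t-\delta)\,dt=0\quad\text{for all }\varphi .
\]
Expanding $r=\sum c_ne^{in\varphi}$, this becomes $c_n\int_0^{2\delta}e^{int}\sin(t-\delta)\,dt=0$ for every $n$. The modes $n=\pm1$ vanish anyway because the curve closes up, and $n=0$ is always allowed. A direct computation shows the integral vanishes for $n\ge 2$ exactly when $\tan(n\delta)=n\tan\delta$ (Gutkin's condition). Define $\mathscr R$ as the set of $\rho$ with $\tan(n\pi\rho)\neq n\tan(\pi\rho)$ for all $n\ge2$. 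For $\rho\in\mathscr R$ all modes $n\ge 2$ must vanish, so $r$ is constant and $\Omega$ is a disk.

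It remains to check two things. First, the complement of $\mathscr R$ is a countable union of discrete solution sets (one per $n$), hence of measure zero, and it is dense, as one expects for Gutkin's curves. Second, no rational $p/q\in(0,1/2)$ solves $\tan(n\pi p/q)=n\tan(\pi p/q)$. I would prove this last point with an algebraic-number argument: $\tan(\pi p/q)$ is an algebraic number, and the identity $\tan(n\delta)=n\tan\delta$ would force an impossible rational relation among Galois conjugates. I expect this step to be the hardest.
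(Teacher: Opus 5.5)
The paper does not prove this statement. It quotes it from Baranzini--Bialy--Sorrentino, and the rigidity clause rests on Gutkin's classification. Taken on its own, your argument is a correct proof, and it is the natural argument written in elementary terms.

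\textbf{Relation to the paper's coordinates.} In the line coordinates of Section 3, your averaged polygon is the test configuration $\varphi_{k+1}=\varphi_k+2\pi\rho$. For it $\delta\equiv\pi\rho$, and the reflection points $\gamma(\psi_k)$ (the boundary point with outer normal $\psi_k$) are equally spaced in $\psi$. Summing by parts, $-\sum_k S(\varphi_k,\varphi_{k+1})=2\sin(\pi\rho)\sum_k h(\psi_k)$ is exactly your sum of projections $\sum_k\langle\gamma(\psi_k)-\gamma(\psi_{k-1}),t_k\rangle$. Here $t_k$ is the direction of the $k$-th line, i.e.\ the mean tangent direction.

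\textbf{What each route buys.} Evaluating $S$ on this configuration gives the inequality directly from Aubry--Mather minimality. For the equality case, however, that route must argue that each test configuration, being a minimizer, is an orbit. Your extra step $|v|\ge\langle v,e\rangle$ needs only Birkhoff's max-perimeter characterization, and it makes the equality case immediate. Zero defect means every chord $[\gamma(\varphi),\gamma(\varphi+2\pi\rho)]$ is parallel to the bisector of the tangents at its ends. Your Fourier computation is exactly Gutkin's criterion.

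\textbf{Irrational $\rho$.} This case is as easy as you hope. Let $F(\alpha)$ be the mean length of the chord $[\gamma(\varphi),\gamma(\varphi+\alpha)]$. The inequality $-\beta_\Omega(p/q)\ge F(2\pi p/q)$ passes to the limit, since both sides are continuous. So equality at $\rho$ forces zero defect at $\alpha=2\pi\rho$.

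\textbf{Density of the complement.} This is only asserted, but it is easy. Take any interval of length $\pi/n$ inside $(0,\pi/2)$ on which $\tan(n\delta)$ runs over $\R$. The intermediate value theorem gives a root of $\tan(n\delta)=n\tan\delta$ there, and these intervals become dense as $n\to\infty$. It is cleaner to state the condition as $\sin(n\delta)\cos\delta\neq n\cos(n\delta)\sin\delta$. Then $\rho=1/2$, where all odd modes survive (constant-width tables), is correctly excluded from $\mathscr R$.

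\textbf{Rationals are not roots.} That no rational $p/q\in(0,1/2)$ is a root is a known theorem of V.~Cyr (2012), and your Galois idea does prove it. Take $\zeta=e^{2\pi i p/q}$ with $\gcd(p,q)=1$ and $q\ge 3$. The relation reads $\frac{\zeta^n-1}{\zeta^n+1}=n\,\frac{\zeta-1}{\zeta+1}$. Since $\tan(n\pi p/q)$ is finite and positive, $\zeta^n$ has order $m\ge 3$. Taking the norm from $\Q(\zeta)$ to $\Q$ gives
\[
n^{\phi(q)}=\left(\frac{\Phi_m(1)}{\Phi_m(-1)}\right)^{\phi(q)/\phi(m)}\frac{\Phi_q(-1)}{\Phi_q(1)},
\]
where $\phi$ is Euler's function, $\Phi_j$ are the cyclotomic polynomials, and each $\Phi_j(\pm1)$ with $j\ge 3$ is $1$ or a prime. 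For a prime dividing $n$, its exponent is at least $\phi(q)$ on the left and at most $\phi(q)/2+1$ on the right. Hence $\phi(q)\le 2$, i.e.\ $q\in\{3,4,6\}$, and the three cases $\rho=1/3,1/4,1/6$ are checked by hand.
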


We recall what it means for a billiard map in a strictly convex domain $\Omega\subset\R^2$ to have an invariant curve of constant angle associated to $\rho\in[0,1/2]$. This notion was introduced and studied by Gutkin \cite{Gutkin} to study the so-called \textit{floating problem}.

The boundary of $\Omega$ is assumed to be $\mathscr C^2$-smooth. The billiard map $T_{\Omega}$ in $\Omega$ is defined on the set of pairs $(q,\varphi)\in\partial\Omega\times [0,\pi]$: 
in fact $T_{\Omega}(q,\varphi)=(q',\varphi')$ if and only if the oriented line starting at $q$ and making an angle $\varphi$ with the boundary intersects the boundary at another point $q'$, 
and bounces off it with an angle $\varphi'$. 
The map $T_{\Omega}$ is said to have \textit{an invariant curve of constant angle associated to $\rho$} if there is $\varphi_0\in[0,\pi]$ such that the set $\mathscr C :=\partial\Omega\times\{\varphi_0\}$ is invariant by $T$, and $\left.T_{\Omega}\right|_{\mathscr C}$ has rotation number $\rho$.

Disks are natural examples of billiard table with invariant curves of constant angle associated to any rotation number in $[0,1/2]$. Gutkin \cite[Corollary 2]{Gutkin} showed that there is a set, which is exactly the set $\mathscr R$ of Theorem \ref{thm:bbs}, such that the only domains having an invariant curve of constant angle associated to a $\rho\in\mathscr R$ are the circular billiards. He also showed that if $\rho\notin\mathscr R$, there are non-circular domains having an invariant curve of constant angle associated to $\rho$.

As a consequence of Theorem \ref{thm:bbs}, disks are global maximizer of the function 
\begin{equation}
\label{equ:beta_rho}
\beta(\rho):\Omega\in\mathscr D_p\mapsto\beta_{\Omega}(\rho)
\end{equation}
defined for a fixed $\rho$ over the set $\mathscr D_p$ of strictly convex domains of a given perimeter $p>0$. Moreover, the maximum is strict for all except a countable set of rotation numbers.

In this paper we also intend to address the question of the \textit{local} extrema of $\beta(\rho)$, to which Theorem \ref{thm:main_cst_perimeter} and Corollary \ref{cor:main_cst_perimeter} are related, see Corollary \ref{cor:ell_non_loc_max}. Given an integer $r>0$, a $\mathscr C^r$-smooth domain $\Omega$ is said to be a \textit{local $\mathscr C^r$-maximizer} (resp. \textit{local $\mathscr C^r$-minimizer}) of $\beta(\rho)$ if there exists a neighborhood $\mathcal U$ of $\Omega$ in the $\mathscr C^r$-smooth topology such that for any $\Omega'\in\mathcal U$,
    \[
    |\partial\Omega|=|\partial\Omega'|
    \qquad\Longrightarrow\qquad
    \beta_{\Omega'}(\rho) \leq \beta_{\Omega}(\rho)
    \qquad \text{resp. }
    \beta_{\Omega'}(\rho) \geq \beta_{\Omega}(\rho).
    \]
Given a domain $\Omega$, we denote by $\varrho$ a radius of curvature of the domain associated to a given parametrization by arc-length. A set $\mathscr S_r$ of domains with $\mathscr C^r$-smooth boundaries is said to be bounded if there is $C>0$ such that 
\[
\|\varrho\|_{\mathscr C^r}\leq C
\]
for any radii of curvature $\varrho$ of domains in $\mathscr S_r$. We show

\begin{theorem}
\label{thm:main_local_max}
There exists $r\geq 2$ with the following property. Given a $\mathscr C^r$-bounded set $\mathscr S_r$ of domains, there exists a positive measure set $\mathscr R'=\mathscr R'(\mathscr S_r)\subset(0,1/2]$ accumulating to $0$ such that if $\rho\in\mathscr R'$ then 
\begin{enumerate}
    \item $\beta(\rho)$ has no local $\mathscr C^r$-minimizers ;
    \item the only possible local $\mathscr C^r$-maximizers of $\beta(\rho)$ lying in $\mathscr S_r$ are disks.
\end{enumerate} 
\end{theorem}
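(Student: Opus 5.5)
I would prove Theorem \ref{thm:main_local_max} by a first-variation argument for $\beta(\rho)$ under perimeter-preserving deformations. The key tool is the variational formula for the maximal perimeter $L_{p/q}$, in the spirit of \cite{FKS_ellipse}. Let $\Omega$ be a local $\mathscr C^r$-extremizer in $\mathscr S_r$ and let $\rho=p/q$ be rational; irrational $\rho$ is obtained as a limit, or avoided altogether. Take a normal deformation $\Omega_t$ with $\partial\Omega_t=\partial\Omega+t\,n(s)\nu(s)$. If the maximizing $p/q$-orbit is nondegenerate, the derivative of $L_{p/q}$ at $t=0$ equals
\[
\sum_{k=1}^q 2 n(s_k)\sin\varphi_k,
\]
where $(s_k,\varphi_k)$ are the bounce points and angles of the maximizing orbit. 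In general one uses one-sided derivatives as a max over maximizers, which still yields inequalities. The perimeter derivative is $\int n/\varrho\,ds$. Hence, at a local extremum under the perimeter constraint, there is a Lagrange multiplier $\lambda$ such that, on every maximizing orbit,
\[
\sum_k n(s_k)\sin\varphi_k=\lambda\int\frac{n}{\varrho}\,ds
\]
for all smooth $n$. More precisely, at a maximizer one gets an inequality for the max over maximal orbits, and equality in the minimizer case.

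\textbf{Step 1 (no minimizers).} Localize $n$ as a bump supported away from the finitely many bounce points of one maximal orbit, with $\int n/\varrho\,ds\neq 0$. For a minimizer, $\beta$ must not decrease. But $L_{p/q}$ is a max, so it is lower semicontinuous in the correct direction: choosing the sign of $\int n/\varrho$ appropriately, $L$ stays unchanged to first order while the perimeter changes. Rescaling to restore the perimeter then strictly decreases $L/|\partial\Omega|$, and this contradicts minimality. This argument should work for every rational $\rho$; I would then pass to a positive-measure set by the convexity and continuity of $\beta$.

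\textbf{Step 2 (maximizers are disks).} For a maximizer, the condition says that the discrete measure $\sum_k\sin\varphi_k\,\delta_{s_k}$ dominates $\lambda\,ds/\varrho$ in the sense of a max over maximizing orbits. A finite atomic set of orbits cannot dominate an absolutely continuous measure. Hence the maximal orbits must fill the boundary densely, i.e.\ form an invariant curve of rotation number $\rho$. On that curve the condition forces $\sin\varphi$ to be proportional to the density of bounce points relative to $ds/\varrho$, which gives constant angle. Then Gutkin's theorem, as recalled in Theorem \ref{thm:bbs}, yields a disk whenever $\rho\in\mathscr R$.

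The main obstacle is that rational $\rho$ with an invariant curve can occur, and Gutkin's set excludes only the ``bad'' numbers. This is exactly where the set $\mathscr R'$ and the bound $\mathscr S_r$ come in. For $\rho$ Diophantine and small, the $\mathscr C^r$ bound on $\varrho$ gives, via KAM and Lazutkin coordinates, a uniform family of caustics near the boundary of positive measure in $\rho$, accumulating at $0$. On these caustics, $\beta$ is differentiable and its derivative in $\Omega$ is given by the integral formula of Lazutkin's parametrization. The argument then has two parts. First, the Lagrange condition forces the invariant curve to have constant angle; this is the delicate point, to be handled by testing against Fourier modes $n=e^{2\pi i m x}$ in Lazutkin coordinates. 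Second, one intersects with Gutkin's $\mathscr R$, which has full measure. I would try first to make the first-order condition into the identity $\varrho\sin\varphi(s)\,\dot x(s)=\text{const}$ and deduce that $\varphi$ is constant along the caustic.
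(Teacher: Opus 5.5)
\textbf{Verdict: the proposal has a genuine gap.} Your framework for the Diophantine case is the paper's: KAM curves uniform over $\mathscr S_r$, then a first variation at fixed perimeter, then constant angle, then Gutkin. Your target identity $\varrho\sin\varphi\,\dot x=\mathrm{const}$ is exactly the paper's condition $\sin\delta_0(\psi)\Theta_0'(\psi)=M$, provided $x$ is the action parameter $\Theta$ that conjugates $T_\Omega$ on the curve to rotation by $\rho$. It is not Lazutkin's boundary coordinate, which is a different function.

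\textbf{First gap: from the stationarity condition to constant angle.} You flag this step as ``delicate'' and give no argument, yet it is the whole content of the theorem. The same unsupported ``which gives constant angle'' appears in your rational Step~2. Testing against Fourier modes is a linear device, while the condition is a nonlinear relation between $\Omega$ and its own invariant curve, so it will not give constancy of the angle. The missing idea is global and uses Theorem~\ref{thm:bbs}:
\begin{itemize}
\item Integrate the condition against $h$. Using $\int_0^{2\pi}h\,d\psi=|\partial\Omega|$ and Lemma~\ref{lemma:Mather_beta}, this gives $\beta_\Omega(\rho)=-\frac{|\partial\Omega|}{\pi}\int_0^1\sin\delta_0\,d\Theta$.
\item The inequality of Theorem~\ref{thm:bbs} then yields $\int_0^1\sin\delta_0\,d\Theta\ge\sin(\pi\rho)$.
\item From $\psi_0(\Theta+\rho)-\psi_0(\Theta)=\delta_0(\Theta)+\delta_0(\Theta+\rho)$ and $\psi_0(\Theta+1)=\psi_0(\Theta)+2\pi$ one gets $\int_0^1\delta_0\,d\Theta=\pi\rho$.
\item Strict concavity of $\sin$ on $[0,\pi]$ (Jensen) gives the reverse inequality, so equality holds and $\delta_0\equiv\pi\rho$.
\end{itemize}
Only after this does Gutkin's theorem apply.

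\textbf{Second gap: Step~1 does not reach $\mathscr R'$.} Your bump argument is correct for rational $\rho=p/q$. It even shows that no domain is a local minimizer of $\beta(p/q)$. But rationals form a countable set, and the theorem asks for positive measure. For $\rho\in\mathscr R'$ the maximizing set is a KAM curve whose bounce points cover all of $\partial\Omega$, so there is no room for a bump. Non-minimality at nearby rationals also does not pass to the limit through ``convexity and continuity of $\beta$''. As $p/q\to\rho$ the bounce points become dense, the admissible bumps shrink, and the gain degenerates. Local non-minimality in $\Omega$ is not a closed condition in $\rho$.

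The paper needs no separate argument for minimizers. The first-order condition holds at any local extremum, maximum or minimum. The argument above therefore shows that every local extremizer in $\mathscr S_r$ is a disk. A disk is the strict global maximizer by Theorem~\ref{thm:bbs} when $\rho\in\mathscr R$, so it cannot be a local minimizer.
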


\noindent Note that this result holds in particular when $\mathscr S_r$ is reduced to a single domain $\{\Omega\}$. In that case, $\mathscr R'$ is a positive measure set in $(0,1/2]$ -- depending on $\Omega$ -- such that for any $\rho\in\mathscr R'$, $\Omega$ is not a local $\mathscr C^r$-minimizer of $\beta(\rho)$, and not a local $\mathscr C^r$-maximizer of $\beta(\rho)$, unless $\Omega$ is a disk.

\begin{remark}
    $\mathscr R'$ contains so-called Diophantine numbers corresponding to the existence of KAM curves. We don't know whether $\mathscr R'$ contains other numbers, such as rational numbers. 
\end{remark}

\begin{remark}
    In Theorem \ref{thm:main_local_max}, the set $\mathscr S_r$ doesn't need to be open. A domain $\Omega\in\mathscr S_r$ is a $\mathscr C^r$-maximizers of $\beta(\rho)$ if it maximizes the map $\Omega'\mapsto\beta_{\Omega'}(\rho)$ among all nearby domains of the same perimeter as $\Omega$, but not necessarily lying in $\mathscr S_r$.
\end{remark}

Since ellipses are natural generalizations of disks, one may ask whether ellipses can be local maximizers of $\beta(\rho)$ for any given $\rho\neq 0$. Corollary \ref{cor:main_cst_perimeter} immediately shows that his is not the case for any non-circular ellipses:

\begin{corollary}
    \label{cor:ell_non_loc_max}
    Let $r\in\Z_{>0}\cup\{+\infty,\rho\}$ and $\rho\in(0,1/2]$. A non-circular ellipse is not a local $\mathscr C^r$-maximizer of the map 
    \[
    \beta(\rho):\Omega\mapsto\beta_{\Omega}(\rho).
    \]
\end{corollary}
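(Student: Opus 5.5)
The plan is to use Theorem \ref{thm:main_cst_perimeter} directly. Fix $\rho\in(0,1/2]$ and a non-circular ellipse $\mathscr E$ with eccentricity $e_0\in(0,1)$ and perimeter $p=|\partial\mathscr E|$. Up to isometries, $\mathscr E$ is the member $\mathscr E_{e_0}$ of the one-parameter family $(\mathscr E_e)_{e\in[0,1]}$ of ellipses of perimeter $p$ and eccentricity $e$. Theorem \ref{thm:main_cst_perimeter} says $e\mapsto\beta_{\mathscr E_e}(\rho)$ is strictly decreasing. So for every $e\in[0,e_0)$ we have
\[
\beta_{\mathscr E_e}(\rho)>\beta_{\mathscr E_{e_0}}(\rho)=\beta_{\mathscr E}(\rho),
\qquad |\partial\mathscr E_e|=|\partial\mathscr E|.
\]
It then suffices to show that $\mathscr E_e\to\mathscr E$ in the $\mathscr C^r$ topology as $e\to e_0^-$. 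Any $\mathscr C^r$-neighborhood $\mathcal U$ of $\mathscr E$ would then contain a domain of the same perimeter with strictly larger $\beta(\rho)$, so $\mathscr E$ is not a local $\mathscr C^r$-maximizer.

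For the continuity step, I would place all ellipses centered at the origin with major axis along the $x$-axis, which is allowed since $\beta$ and perimeter are isometry invariant. Write the semi-axes as $a(e)$ and $b(e)=a(e)\sqrt{1-e^2}$. The perimeter constraint reads $a(e)\,E(e)=p/4$, where $E(e)=\int_0^{\pi/2}\sqrt{1-e^2\sin^2t}\,dt$ is the complete elliptic integral of the second kind. Since $E$ is real analytic and positive on $[0,1)$, $a$ and $b$ depend analytically on $e\in[0,1)$. The boundary $t\mapsto(a(e)\cos t,b(e)\sin t)$ is then real-analytic jointly in $(e,t)$. Hence all $t$-derivatives depend continuously on $e$, uniformly in $t$, which gives convergence in $\mathscr C^r$ for every finite $r$, in $\mathscr C^\infty$, and also in the analytic topology on a fixed complex strip; I read the exponent $r=\rho$ in the statement as this analytic case. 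If the $\mathscr C^r$ topology on domains is defined through the arc-length radius of curvature $\varrho$, the same holds, because the arc-length reparametrization and the curvature are analytic functions of $(e,t)$, the ellipse being strictly convex.

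The only real content is Theorem \ref{thm:main_cst_perimeter}, which I take as given; the step needing care is the topological bookkeeping. The neighborhood is taken in a topology on domains modulo isometries, or with a normalized position, and the definition of local maximizer only compares domains of equal perimeter, which the family $\mathscr E_e$ satisfies by construction. One could also perturb in the direction $e>e_0$, but that gives smaller values of $\beta(\rho)$ and does not help against maximality, so one uses $e<e_0$.
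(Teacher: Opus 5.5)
Your proof is correct and is essentially the paper's argument. The paper says the corollary follows immediately from the constant-perimeter result, and you carry out that deduction: you move along the equal-perimeter family toward the disk ($e<e_0$), where Theorem \ref{thm:main_cst_perimeter} gives a strictly larger value of $\beta(\rho)$. You also check $\mathscr C^r$-continuity through the explicit analytic family $a(e)=p/(4E(e))$, $b(e)=a(e)\sqrt{1-e^2}$, which the paper leaves implicit. Citing the strict monotonicity of Theorem \ref{thm:main_cst_perimeter} is slightly more direct than the paper's reference to the uniqueness in Corollary \ref{cor:main_cst_perimeter}, since uniqueness only yields the correct sign once combined with continuity and Theorem \ref{thm:bbs}.
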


\section{Acknowledgements}
The author is grateful to Alfonso Sorrentino for useful discussions. 
He acknowledges the support of the Italian Ministry of University and Research's PRIN 2022 grant Stability in Hamiltonian dynamics and beyond, as well as the Department of Excellence grant MathMod@TOV (2023-27) awarded to the Department of Mathematics of the University of Rome Tor Vergata.

\section{First variations of Mather's beta function}

The billiard map inside a strictly convex domain $\Omega$ acts on the space of oriented lines intersecting $\Omega$. 
Fix a point $O$ of origin in $\Omega$ and an arbitrary oriented line which we refer to as the $x$-axis.
Each line intersecting $\Omega$ can be associated to a pair $(\varphi, p)$, where $p$ measures the signed distance of the oriented line to the origin $O$, and $\varphi$ corresponds to the angle of its right normal with respect to the $x$-axis -- see Figure \ref{fig:oriented_line}. 

\begin{figure}[h!]
    \centering
    \begin{tikzpicture}[scale=1]

\draw[->] (-3,0) -- (3,0) node[right] {$x$};

\def\phi{40}   
\def\p{1.4}    

\coordinate (N) at ({\p*cos(\phi)},{\p*sin(\phi)});

\draw[thick, ->] ($(N)+({3*sin(\phi)},{-3*cos(\phi)})$) -- ($(N)+({-3*sin(\phi)},{3*cos(\phi)})$) node[above] {$\ell$};

\coordinate (O) at (0,0);
\fill (O) circle (1.2pt);

\draw[->,thick] (O) -- (N) node[midway,above left] {$p$};

\draw (0.6,0) arc (0:\phi:0.6);
\node at ({0.9*cos(\phi/2)},{0.9*sin(\phi/2)}) {$\varphi$};

\node[above left] at (0,0) {O};

\end{tikzpicture}
    \caption{An oriented line $\ell$ with coordinates $(\varphi,p)$: $\ell$ is at at distance $p$ from the origin $O$ and its right normal makes an angle $\varphi$ with the $x$-axis.}
    \label{fig:oriented_line}
\end{figure}
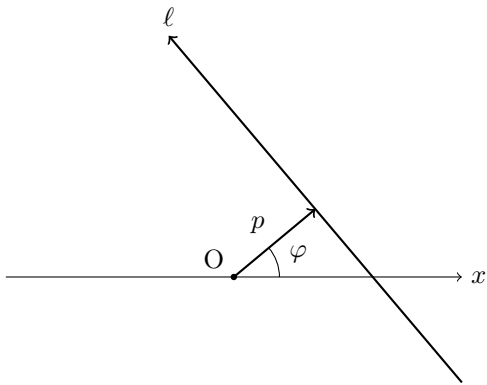

The boundary of the domain $\Omega$ is determined by its support function $h:\R/2\pi\Z\to\R$, defined so that $h(\psi)$ is the distance from $O$ to the tangent line to $\partial\Omega$ whose right normal makes an angle $\psi$ with the $x$-axis.

Following \cite[Proposition 3.1]{Bialyellipses}, the billiard map $T_{\Omega}$ in $\Omega$ is an exact symplectic twist map on the space of oriented lines intersecting $\Omega$ and its generating function is given by
\[
S(\varphi_1,\varphi_2) = -2h(\psi)\sin\delta
\]
where
\[
\psi=\frac{\varphi_1+\varphi_2}{2},
\qquad\qquad
\delta=\frac{\varphi_2-\varphi_1}{2}.
\]

\begin{definition}
Assume that the billiard map $T_{\Omega}$ in a domain $\Omega$ as an invariant curve of rotation number $\rho\in[0,1/2]$. We say that the latter is \textit{action-parametrized} by a map $\Theta\in\R/\Z\mapsto (\varphi(\Theta),p(\Theta))$ if $T_{\Omega}$ satisfies
    \[
    T_{\Omega}(\varphi(\Theta),p(\Theta)) = (\varphi(\Theta+\rho),p(\Theta+\rho)),\qquad \Theta\in\R.
    \]
Given such a parametrization, we can define
\[
\psi(\Theta)=\frac{\varphi(\Theta)+\varphi(\Theta+\rho)}{2},
\qquad\qquad
\delta(\Theta)=\frac{\varphi(\Theta+\rho)-\varphi(\Theta)}{2}.
\]
\end{definition}

\begin{remark}
    Not all invariant curves admit an action-parametrization. Although it is always the case for irrational rotation numbers and sufficiently smooth domains, this can fail for rational ones.
\end{remark}

\begin{lemma}
\label{lemma:Mather_beta}
    Assume that the billiard map in $\Omega$ as an invariant curve of rotation number $\rho\in[0,1/2]$, and action-parametrized by a map $\Theta\in\R/\Z\mapsto (\varphi(\Theta),p(\Theta))$.
    Then $\beta_{\Omega}(\rho)$ can be expressed as
    \[
    \beta_{\Omega}(\rho) = \int_0^1S(\varphi(\Theta),\varphi(\theta+\rho))d\theta = -2\int_0^1 h(\psi(\Theta))\sin\delta(\Theta)d\theta.
    \]
\end{lemma}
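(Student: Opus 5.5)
We compute Mather's beta function as the average action along the invariant curve, using the standard fact from Aubry--Mather theory: for an exact symplectic twist map with generating function $S$, and an invariant curve of rotation number $\rho$ on which the dynamics is conjugate to the rigid rotation $\Theta\mapsto\Theta+\rho$, the curve supports action-minimizing orbits. The value of the $\alpha/\beta$ function is then the average of $S$ along those orbits with respect to the unique invariant measure. Here the sign convention makes $\beta_\Omega(p/q)=-L_{p/q}/q$, i.e.\ minimal action equals minus maximal length.

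\textbf{Step 1 (normalisation of $S$ and of $\beta$).} With the paper's convention, $S(\varphi_1,\varphi_2)=-2h(\psi)\sin\delta$ is minus the length of the chord between the two consecutive reflection points. Indeed, the distance between two successive reflection points on lines with normal angles $\varphi_1,\varphi_2$ is $2h(\psi)\sin\delta$ up to an exact term $g(\varphi_2)-g(\varphi_1)$. That exact term averages to zero along any periodic or rotationally ordered orbit, so it does not affect the computation.

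\textbf{Step 2 (invariant measure).} Because the curve is action-parametrized, the restriction of $T_\Omega$ to it is conjugate to $R_\rho:\Theta\mapsto\Theta+\rho$ on $\R/\Z$. Lebesgue measure $d\Theta$ is therefore an invariant probability measure; for irrational $\rho$ it is the unique one. By Mather's theory, orbits lying on an invariant (hence Lipschitz graph, by Birkhoff's theorem) curve are minimizing, so $\beta_\Omega(\rho)$ equals the average action with respect to any invariant measure supported on the curve:
\[
\beta_\Omega(\rho)=\int_0^1 S(\varphi(\Theta),\varphi(\Theta+\rho))\,d\Theta .
\]

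\textbf{Step 3 (rational case).} For $\rho=p/q$, every orbit on the curve is periodic. Each is a maximal-perimeter orbit, since all orbits on an invariant curve of rotation number $p/q$ are minimizers, and they all share the same action $-L_{p/q}$. So $\frac1q\sum_{k=0}^{q-1}S(\varphi(\Theta+k\rho),\varphi(\Theta+(k+1)\rho))=\beta_\Omega(p/q)$ for every $\Theta$. Integrating over $\Theta\in[0,1/q)$ and using the translation invariance $\Theta\mapsto\Theta+\rho$ of Lebesgue measure turns this into the integral formula. Substituting $\psi(\Theta),\delta(\Theta)$ gives the second expression.

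The main obstacle is Step 2: justifying that the invariant curve consists of minimizers and that $\beta$ equals the average action. This rests on Birkhoff's graph theorem and Mather's result that invariant circles are made of action-minimizing orbits. We would cite these rather than reprove them. A secondary point is checking the sign conventions so that the minimal action matches $-L/q$.
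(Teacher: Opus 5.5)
Your proposal is correct and is essentially the paper's argument. Both rest on the fact that orbits on the invariant curve are action-minimizing, so their average action is $\beta_\Omega(\rho)$: the paper uses this implicitly when asserting $\Sigma_q(\Theta)\to\beta_\Omega(\rho)$ pointwise, while you justify it via Birkhoff's graph theorem and Mather's theory. Both then average against the shift-invariant measure $d\Theta$, the paper through the sums $\Sigma_q$ and dominated convergence, you directly at the level of invariant measures (your Step 3 is exactly the paper's computation with $q$ the period).

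One small correction to Step 1: the chord length and $2h(\psi)\sin\delta$ differ by a dynamical coboundary $G-G\circ T_\Omega$, where $G$ is a bounded function of the whole line $(\varphi,p)$. In general this difference is not of the form $g(\varphi_2)-g(\varphi_1)$ with $g$ depending on the angle alone. This is harmless, since such a coboundary still averages to zero along orbits and against invariant measures.
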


\begin{proof}
    Define for $q\geq 0$ and $\Theta\in\R$ the function
    \[
    \Sigma_q(\Theta) = \frac{1}{q}\sum_{k=0}^{q-1}S(\varphi(\Theta+k\rho),\varphi(\Theta+(k+1)\rho)).
    \]
    Integrating $\Sigma_q$ over $\Theta$ gives
    \[
    \int_0^1 \Sigma_q(\Theta) d\Theta = \int_0^1 S(\varphi(\Theta),\varphi(\Theta+\rho))d\Theta.
    \]
    Now since $\Sigma_q$ is uniformly bounded in $q$ and pointwise converges to $\beta_{\Omega}(\rho)$, we conclude that
    \[
    \int_0^1 \Sigma_q(\Theta) d\Theta\longrightarrow\beta_{\Omega}(\rho),\qquad q\longrightarrow+\infty
    \]
    and the result follows.
\end{proof}

In what follows, given any $r\geq 1$, we will say that a domain's boundary is $\mathscr C^r$-smooth if $h$ is $\mathscr C^r$-smooth. Given an interval $I$, a one-parameter family of domains $(\Omega_{\tau})_{\tau\in I}$ caracterized by their respective support functions $(h_{\tau})_{\tau\in I}$ is said to be $\mathscr C^r$-smooth if $(\tau,\psi)\mapsto h_{\tau}(\psi)$ is a $\mathscr C^r$-smooth map.

\begin{corollary}[See \cite{FKS_ellipse}]
\label{cor:first_var_beta}
Let $\rho\in[0,1/2)$ and $(\Omega_{\tau})_{\tau}$ be a $\mathscr C^1$-smooth family of domains, such that for any $\tau$ the domain $\Omega_{\tau}$ admits an invariant curve of rotation number $\rho$ which is action-parametrized by  $\Theta\in\R/\Z\mapsto (\varphi_{\tau}(\Theta),p_{\tau}(\Theta))$. If the map $(\tau,\Theta)\mapsto\varphi_{\tau}(\Theta)$ is $\mathscr C^1$-smooth, then $\tau\in I\mapsto\beta_{\Omega_{\tau}}(\rho)$ is $\mathscr C^1$-smooth and
\begin{equation}
\label{eq:first_var_beta}
\frac{d}{d\tau}\beta_{\Omega_{\tau}}(\rho) = \int_0^1\partial_{\tau}h_{\tau}(\psi_{\tau}(\Theta))\sin\delta_{\tau}(\Theta)d\Theta.
\end{equation}
\end{corollary}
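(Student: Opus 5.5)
The plan is to differentiate the integral formula of Lemma \ref{lemma:Mather_beta} in $\tau$. The key point is that the variation of the orbit $\varphi_\tau$ contributes nothing, because the invariant curve is made of critical configurations of the action. Only the explicit dependence of the generating function on $\tau$ through $h_\tau$ should survive, and that term is integrated along the orbit. This is the envelope-theorem argument of \cite{FKS_ellipse}. The point I cannot settle in advance is the constant in front of the surviving integral; I flag it in the third paragraph.

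First, write $S_\tau(\varphi_1,\varphi_2)=-2h_\tau\bigl(\tfrac{\varphi_1+\varphi_2}{2}\bigr)\sin\tfrac{\varphi_2-\varphi_1}{2}$. By Lemma \ref{lemma:Mather_beta},
\[
\beta_{\Omega_\tau}(\rho)=\int_0^1 S_\tau\bigl(\varphi_\tau(\Theta),\varphi_\tau(\Theta+\rho)\bigr)\,d\Theta .
\]
Since $(\tau,\psi)\mapsto h_\tau(\psi)$ and $(\tau,\Theta)\mapsto\varphi_\tau(\Theta)$ are $\mathscr C^1$ on $I\times\R/\Z$, the integrand is $\mathscr C^1$ in $(\tau,\Theta)$. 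Its $\tau$-derivative is continuous on $I\times\R/\Z$, so we may differentiate under the integral sign, which also gives the $\mathscr C^1$ regularity of $\tau\mapsto\beta_{\Omega_\tau}(\rho)$. By the chain rule the derivative splits into three terms:
\[
\int_0^1(\partial_\tau S_\tau)\bigl(\varphi_\tau(\Theta),\varphi_\tau(\Theta+\rho)\bigr)\,d\Theta+\int_0^1\Bigl[\partial_1S_\tau(\Theta)\,\partial_\tau\varphi_\tau(\Theta)+\partial_2S_\tau(\Theta)\,\partial_\tau\varphi_\tau(\Theta+\rho)\Bigr]d\Theta .
\]
In the second integral, substitute $\Theta\mapsto\Theta-\rho$ in the term containing $\partial_2S_\tau$; this is allowed by $1$-periodicity. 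It then becomes $\int_0^1\bigl[\partial_1S_\tau(\varphi(\Theta),\varphi(\Theta+\rho))+\partial_2S_\tau(\varphi(\Theta-\rho),\varphi(\Theta))\bigr]\partial_\tau\varphi_\tau(\Theta)\,d\Theta$. The bracket is exactly the discrete Euler--Lagrange expression of the twist map $T_{\Omega_\tau}$ at the configuration $\varphi_\tau(\Theta)$. It vanishes identically because each orbit $(\varphi_\tau(\Theta+k\rho))_k$ is a billiard orbit generated by $S_\tau$. Hence only the explicit-variation term remains.

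It remains to evaluate $\partial_\tau S_\tau$ along the orbit, where $\psi=\psi_\tau(\Theta)$ and $\delta=\delta_\tau(\Theta)$ are frozen, and to match it with the integrand $\partial_\tau h_\tau(\psi_\tau(\Theta))\sin\delta_\tau(\Theta)$ of \eqref{eq:first_var_beta}. This is the step I expect to be the main obstacle. Differentiating $S_\tau$ literally in the form written above gives $-2\,\partial_\tau h_\tau(\psi)\sin\delta$, not the stated integrand. So the constant in \eqref{eq:first_var_beta} has to be reconciled with the normalization of $S$ and of $\beta$ used in Lemma \ref{lemma:Mather_beta} and in \cite{FKS_ellipse}. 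I would first recheck the generating function of \cite[Proposition 3.1]{Bialyellipses}. Concretely, I would test the chord length $2h(\psi)\sin\delta$ on the circle of radius $R$, where $\beta_{\D_R}(\rho)=-2R\sin(\pi\rho)$ must hold. I would then compare the result with \eqref{eq:first_var_beta} for the test family $h_\tau=\tau$, i.e.\ circles of radius $\tau$. This test directly fixes which constant the stated identity requires. Only once that convention check confirms the stated form would I write the last step as equal to \eqref{eq:first_var_beta}; I cannot yet confirm that it does.
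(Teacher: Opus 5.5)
Your envelope argument is correct and complete. Differentiating the formula of Lemma~\ref{lemma:Mather_beta} under the integral sign is justified by the joint $\mathscr C^1$ regularity of $(\tau,\psi)\mapsto h_\tau(\psi)$ and $(\tau,\Theta)\mapsto\varphi_\tau(\Theta)$. After the shift $\Theta\mapsto\Theta-\rho$, the terms containing $\partial_\tau\varphi_\tau$ cancel by the discrete Euler--Lagrange equation along the action-parametrized orbits. The paper gives no proof of this corollary beyond the citation, and your argument is the standard first-variation argument that citation refers to.

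The obstacle you flag is not a gap in your reasoning but an error in the statement, and the test you propose settles it. Take $h_\tau\equiv\tau$, i.e.\ circles of radius $\tau$ centred at $O$. The invariant curve of rotation number $\rho$ is action-parametrized by $\varphi_\tau(\Theta)=2\pi\Theta$ (the lines tangent to the concentric caustic of radius $\tau\cos(\pi\rho)$). Hence $\delta_\tau\equiv\pi\rho$ and $\beta_{\Omega_\tau}(\rho)=-2\tau\sin(\pi\rho)$, consistent with $\beta_{\D}(\rho)=-2\sin(\pi\rho)$. The left side of \eqref{eq:first_var_beta} is then $-2\sin(\pi\rho)$, while the printed right side is $\sin(\pi\rho)$, and these disagree for every $\rho\in(0,1/2)$. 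The correct identity is the one your computation produces:
\[
\frac{d}{d\tau}\beta_{\Omega_\tau}(\rho)=-2\int_0^1\partial_\tau h_\tau(\psi_\tau(\Theta))\sin\delta_\tau(\Theta)\,d\Theta .
\]
State this as your conclusion rather than trying to reconcile it with the printed formula.

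The missing factor $-2$ does not harm the rest of the paper. It makes the constant $C_{\tau,\rho}$ in Proposition~\ref{prop:derivative_beta} negative rather than positive. However, the proofs of Theorems~\ref{thm:main_bialy_conj}, \ref{thm:main_cst_perimeter} and \ref{thm:main_local_max} only use these derivatives up to nonzero factors, or only their vanishing. The direction of monotonicity in Theorem~\ref{thm:main_cst_perimeter} is fixed by Theorem~\ref{thm:bbs}, not by the sign of $C_{\tau,\rho}$.
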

\vspace{0.2cm}

Let $(\mathscr E_{\tau})_{\tau\in I}$ be a one-parameter of ellipses given by the implicit equations
\begin{equation}
\label{eq:ellipse}
\mathscr E_{\tau}:\quad 
\frac{x^2}{a_{\tau}^2}+\frac{y^2}{b_{\tau}^2}=1
\end{equation}
where $a_{\tau},b_{\tau}>0$ depend smoothly on $\tau\in I$. The eccentricity of $\mathscr E_{\tau}$ is the number $e_{\tau}\in[0,1]$ obtained by the formula
\[
e_{\tau} = \sqrt{1-\frac{b_{\tau}^2}{a_{\tau}^2}}.
\]
Given $\rho\in[0,1/2)$, each ellipse $\mathscr E_{\tau}$ as an invariant curve of rotation number $\rho$ which admits an action-parametrization. It corresponds to the existence of a caustic, namely a smaller ellipse given by the implicit equation
\[
\mathscr E_{\tau,\rho}:\quad 
\frac{x^2}{a_{\tau}^2-\lambda_{\tau}(\rho)}+\frac{y^2}{b_{\tau}^2-\lambda_{\tau}(\rho)}=1
\]
for a given $\lambda_{\tau}(\rho)\in [0,b_{\tau}^2)$ depending smoothly in $(\tau,\rho)$.
In what follows, we write $a_{\tau}'$ and $b_{\tau}'$ to denote the $\tau$-derivatives of $a_{\tau}$ and $b_{\tau}$.

\begin{proposition}
\label{prop:derivative_beta}
Assume that $\rho\in[0,1/2]$. There exists $C=C_{\tau,\rho}>0$ such that 
\[
\frac{d}{d\tau}\beta_{\mathscr E_{\tau}}(\rho) = 
C_{\tau,\rho} \int_0^{2\pi}\frac{a_{\tau}a_{\tau}'\cos^2\psi+b_{\tau}b_{\tau}'\sin^2\psi}{\sqrt{(1-e_{\tau}^2\sin^2\psi)(1+k_{\tau,\rho}^2\sin^2\psi)}}d\psi
\]
where $k_{\tau,\rho} = \sqrt{\frac{\lambda_{\tau}(\rho)(a_{\tau}^2-b_{\tau}^2)}{a_{\tau}^2(b_{\tau}^2-\lambda_{\tau}(\rho))}}.$
\end{proposition}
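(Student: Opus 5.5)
The plan is to apply Corollary \ref{cor:first_var_beta} to the family $(\mathscr E_\tau)$, using the invariant curve of $T_{\mathscr E_\tau}$ given by the confocal caustic $\mathscr E_{\tau,\rho}$. I would then rewrite the integral over $\Theta$ as an integral over the angle $\psi$ of the normal at the bounce point. Three ingredients are needed: the $\tau$-derivative of the support function, the value of $\sin\delta$ along the invariant curve as a function of $\psi$, and the Jacobian $d\Theta/d\psi$ of the action-parametrization.

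\emph{First ingredient: the support function.} With the origin at the centre and the $x$-axis along the major axis, the support function of $\mathscr E_\tau$ is
\[
h_\tau(\psi)=\sqrt{a_\tau^2\cos^2\psi+b_\tau^2\sin^2\psi}=a_\tau\sqrt{1-e_\tau^2\sin^2\psi}.
\]
Hence
\[
\partial_\tau h_\tau=\frac{a_\tau a_\tau'\cos^2\psi+b_\tau b_\tau'\sin^2\psi}{h_\tau},
\]
which already produces the numerator and one factor $\sqrt{1-e_\tau^2\sin^2\psi}$ of the claimed denominator. The dependence on $\tau$ and $\Theta$ is analytic, since $\lambda_\tau(\rho)$ depends smoothly on $(\tau,\rho)$. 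So the hypotheses of Corollary \ref{cor:first_var_beta} hold for $\rho\in(0,1/2)$ once the parametrization is made explicit.

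\emph{Second ingredient: $\sin\delta$ in terms of $\psi$.} Along an orbit, two consecutive lines $\varphi_1=\psi-\delta$ and $\varphi_2=\psi+\delta$ meet at the boundary point with normal $\psi$, which is
\[
\bigl(a^2\cos\psi/h(\psi),\; b^2\sin\psi/h(\psi)\bigr).
\]
The line with normal $\psi\pm\delta$ through this point must be tangent to the caustic. Equivalently, its signed distance to the origin equals the caustic's support function
\[
\sqrt{(a^2-\lambda)\cos^2(\psi\pm\delta)+(b^2-\lambda)\sin^2(\psi\pm\delta)}.
\]
Squaring and simplifying (a Joachimsthal-type identity), I expect a closed formula of the form
\[
\sin\delta=\frac{\sqrt{b^2-\lambda}\,\sqrt{1+k^2\sin^2\psi}}{b\sqrt{1-e^2\sin^2\psi}}
\]
up to a factor depending only on $(\tau,\rho)$. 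The quantity $k_{\tau,\rho}$ arises naturally from this computation.

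\emph{Third ingredient, the main obstacle: the Jacobian $d\Theta/d\psi$.} For irrational $\rho$ the restricted map is uniquely ergodic, so $d\Theta$ is the unique invariant probability measure on the invariant curve, pushed to the variable $\psi$. To compute it I would use the integrability of the elliptic billiard. In elliptic (or Jacobi) coordinates the dynamics on the caustic level is a rigid rotation in the elliptic-integral variable. Transporting that variable to $\psi$, I expect
\[
\frac{d\Theta}{d\psi}\propto\frac{1}{\sqrt{(1-e^2\sin^2\psi)(1+k^2\sin^2\psi)}}\cdot\frac{h(\psi)}{\sin\delta}\cdot(\text{positive const}).
\]
An alternative check is to verify directly that this density is invariant under $\psi\mapsto\psi(\Theta+\rho)$, using the Poncelet/Chasles structure.

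\emph{Assembly and boundary cases.} Multiplying $\partial_\tau h_\tau\cdot\sin\delta_\tau\cdot d\Theta/d\psi$, the factors of $h$ and $\sin\delta$ should cancel, leaving exactly the stated integrand times a positive constant $C_{\tau,\rho}$. The constant collects the normalisation of $\Theta$ (a complete elliptic integral) and factors like $\sqrt{b^2-\lambda}$. For rational $\rho$, the invariant curves of the ellipse are still action-parametrized by the same integrable structure, so the same formula holds; alternatively one can pass to the limit using continuity of $\beta_{\mathscr E_\tau}$ and of both sides in $\rho$. The endpoint $\rho=1/2$, where the caustic degenerates, is handled by this continuity argument, with the derivative taken in $\tau$ uniformly.
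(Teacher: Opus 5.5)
You follow the same route as the paper: Corollary~\ref{cor:first_var_beta}, the change of variables $\psi=\psi(\Theta)$, your formula for $\partial_\tau h_\tau$, then $\sin\delta$ times the Jacobian $\Theta'(\psi)$. However, the step you yourself call the main obstacle is never carried out. Your density for $d\Theta/d\psi$ is chosen so that the factors $h$ and $\sin\delta$ cancel; it is not derived. So the proposal reduces the proposition to an unproved claim equivalent to it.

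Worse, the one concrete identity you offer along the way is wrong. Along the invariant curve, the Joachimsthal integral gives
\[
\sin\delta(\psi)=J_\rho\,h(\psi)=\frac{\sqrt{\lambda}}{b}\sqrt{1-e^2\sin^2\psi},\qquad J_\rho=\frac{\sqrt{\lambda}}{ab}.
\]
As a check at $\psi=0$: tangency to the caustic of a line through $(a,0)$ forces $\lambda=b^2\sin^2\delta$. The expression you expect is instead exactly $\cos\delta/\sqrt{1-e^2\sin^2\psi}$, because
\[
\cos\delta=\sqrt{1-J_\rho^2h^2}=\frac{\sqrt{b^2-\lambda}}{b}\sqrt{1+k^2\sin^2\psi};
\]
it is not proportional to $\sin\delta$ unless $e=0$. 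So $k$ enters through $\cos\delta$, not through $\sin\delta$.

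Your two ``expected'' formulas are also incompatible. With your $\sin\delta$, the factor $h/\sin\delta$ in your Jacobian is not constant, and the density would be proportional to $\sqrt{1-e^2\sin^2\psi}/(1+k^2\sin^2\psi)$, which is false. Your Jacobian is correct only because $h/\sin\delta=1/J_\rho$ is constant, which is precisely the identity you got wrong. Transporting the elliptic-integral variable to $\psi$ using your second ingredient would therefore produce a wrong density.

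The missing ingredient is an actual formula for the invariant measure on the caustic curve. The paper takes it from Bialy: $d\mu_\rho=\frac{J_\rho}{\sin\delta\cos\delta}\,d\psi$, with $\Theta(\psi)=\mu_\rho([0,\psi])/\mu_\rho([0,2\pi])$. Equivalently, you can derive it as the invariant Leray measure $\omega/dF$ on the level set of the Joachimsthal integral $F=\sin\delta/h$, in Birkhoff coordinates $(s,\cos\delta)$ and using $ds=a^2b^2h^{-3}\,d\psi$. This gives a density proportional to $1/(h\cos\delta)\propto 1/(\sin\delta\cos\delta)$. Then $\sin\delta\cdot\Theta'(\psi)\propto 1/\cos\delta$, and with $h=a\sqrt{1-e^2\sin^2\psi}$ and the formula for $\cos\delta$ above you obtain the claimed integrand.

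Two smaller points, shared with the paper:
\begin{itemize}
\item At $\rho=1/2$ one has $\lambda=b^2$, so $k=\infty$ and the formula has no meaning there. The paper treats $\rho=1/2$ separately via $\beta(1/2)=-2a$, and your continuity argument would not produce the stated form.
\item The constant $C$ is actually negative, since $\frac{d}{d\tau}\beta=\int\partial_\tau S\,d\Theta=-2\int\partial_\tau h\,\sin\delta\,d\Theta$; the factor $-2$ is missing in Corollary~\ref{cor:first_var_beta}. The disks $a_\tau=b_\tau=\tau$ confirm this: there $\frac{d}{d\tau}\beta=-2\sin(\pi\rho)<0$, while the integral equals $2\pi\tau>0$. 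Only $C\neq 0$ is used later, so this is harmless.
\end{itemize}
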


\begin{proof}
We apply Corollary \ref{cor:first_var_beta}. To simplify the notation, we remove the index $\tau$. We first consider the change of coordinates $\psi =\psi(\Theta)$ in \eqref{eq:first_var_beta},  writing $\delta(\psi)$ instead of $\delta(\Theta(\psi))$ to obtain
\[
\frac{d}{d\tau}\beta_{\mathscr E_{\tau}}(\rho) = \int_0^{2\pi}\partial_{\tau}h(\psi)\sin\delta_{\tau}(\psi)\Theta'(\psi)d\psi.
\]
Since for an ellipse $h(\psi) = \sqrt{a^2\cos^2(\psi)+b^2\sin^2(\psi)}$, we first compute
\[
\partial_{\tau} h(\psi) = \frac{a_{\tau}a_{\tau}'\cos^2\psi+b_{\tau}b_{\tau}'\sin^2\psi}{h(\psi)}.
\]
Therefore
\[
\frac{d}{d\tau}\beta_{\mathscr E_{\tau}}(\rho) = \int_0^{2\pi}\frac{a_{\tau}a_{\tau}'\cos^2\psi+b_{\tau}b_{\tau}'\sin^2\psi}{h(\psi)}\sin\delta_{\tau}(\psi)\Theta'(\psi)d\psi.
\]
If $\mu_{\rho}$ is a $T_{\mathscr E}$-invariant measure on the invariant curve of rotation number $\rho$, then the following relation holds:
\[
\Theta(\psi) = \frac{\mu_{\rho}([0,\psi])}{\mu_{\rho}([0,2\pi])}.
\]
According to \cite[Formula (3) in the proof of Theorem 4.1]{Bialyellipses}, $\mu_{\rho}$
can expressed as
\[
d\mu_{\rho} = \frac{J_{\rho}}{\sin\delta(\psi)\cos\delta(\psi)}d\psi
\]
where $J_{\rho} = \frac{\sqrt{\lambda(\rho)}}{ab}$ is the so-called Joahimsthal invariant -- see \cite[Theorem 3.3]{Bialyellipses} or \cite{fierobeCMPLX}. Hence
\[
\frac{d}{d\tau}\beta_{\mathscr E_{\tau}}(\rho) = \frac{J_{\rho}}{\mu_{\rho}([0,2\pi])}\int_0^{2\pi}\frac{a_{\tau}a_{\tau}'\cos^2\psi+b_{\tau}b_{\tau}'\sin^2\psi}{h(\psi)\cos\delta(\psi)}d\psi.
\]
Now the relation\footnote{See \cite[Section 3.2]{Bialyellipses}} $\sin\delta(\psi) = J_{\rho}h(\psi)$ implies
\begin{multline}
\cos\delta(\psi)= \sqrt{1-J_{\rho}^2h(\psi)^2} = \sqrt{1-\frac{\lambda(\rho)}{b^2}+\frac{\lambda(\rho)(a^2-b^2)}{(ab)^2}\sin^2(\psi)}\\
= \sqrt{1-\frac{\lambda(\rho)}{b^2}}\sqrt{1+k_{\tau,\rho}^2\sin^2(\psi)}.
\end{multline}
Moreover we can easily see that
\[
h(\psi) = 
a\sqrt{1-e^2\sin^2\psi}.
\]
As a consequence,
\[
\frac{d}{d\tau}\beta_{\mathscr E_{\tau}}(\rho) = 
\frac{bJ_{\rho}}{\mu_{\rho}([0,2\pi])a\sqrt{b^2-\lambda(\rho)}}
\int_0^{2\pi}
\frac{a_{\tau}a_{\tau}'\cos^2\psi+b_{\tau}b_{\tau}'\sin^2\psi}
{\sqrt{1-e^2\sin^2\psi}\sqrt{1+k_{\tau,\rho}^2\sin^2(\psi)}}d\psi
\]
and the result follows.
\end{proof}

\section{Proof of Bialy's conjecture -- Theorem \ref{thm:main_bialy_conj}}
\label{sec:proof_Bialy_conj}

In this section, two ellipses that differ only by translations or rotations are considered identical, since these transformations do not change Mather's beta function.

We begin with a proposition describing the structure of ellipses $\mathscr E$ having a prescribed value of $\beta_{\mathscr E}(\rho)$ for a given $\rho\neq 0$. We call an ellipse \textit{non-degenerate} if it is not flat, that is, if its eccentricity satisfies $e<1$.

\begin{proposition}
\label{prop:structure_ellipses_beta_fixed}
Let $\rho\in(0,1/2]$ and $c<0$. The set of non-degenerate ellipses $\mathscr E$ such that $\beta_{\mathscr E}(\rho) = c$ consists of the elements of an analytic family $(\mathscr E_e^{\rho})_{e\in[0,1)}$ where
$\mathscr E_e$ is an ellipse of eccentricity $e$.
\end{proposition}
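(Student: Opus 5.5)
Modulo isometries, an ellipse is determined by its semi-axes $(a,b)$ with $a\ge b>0$, or equivalently by $(a,e)$ with $e\in[0,1)$. We use the scaling property of Mather's beta function: lengths of orbits scale linearly under homotheties, so $\beta_{\lambda\Omega}(\rho)=\lambda\beta_{\Omega}(\rho)$ for $\lambda>0$. Let $\mathscr F_e$ be the ellipse of eccentricity $e$ with major semi-axis $a=1$, that is, $x^2+y^2/(1-e^2)=1$, and set
\[
f(e):=\beta_{\mathscr F_e}(\rho).
\]
Every ellipse of eccentricity $e$ is, up to isometry, $s\mathscr F_e$ for a unique $s>0$. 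Moreover $\beta_{s\mathscr F_e}(\rho)=s f(e)$. Hence, provided $f(e)<0$, the equation $\beta_{\mathscr E}(\rho)=c$ with $e$ fixed has the unique solution $s=c/f(e)>0$. We then define
\[
\mathscr E_e^{\rho}:=\frac{c}{f(e)}\,\mathscr F_e .
\]
This shows that the solution set is exactly $\{\mathscr E^{\rho}_e\}_{e\in[0,1)}$, with $\mathscr E^{\rho}_e$ of eccentricity $e$.

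Two facts remain. First, we need $f(e)<0$ on $[0,1)$. For $\rho\in(0,1/2]$ and a non-degenerate ellipse, $-\beta(\rho)$ is a limit of averaged maximal perimeters per bounce. For rational $\rho=p/q$ with $q\geq 2$, a periodic orbit with $p\ge1$ has positive length, so $\beta<0$. For irrational $\rho$, Lemma~\ref{lemma:Mather_beta} gives
\[
\beta=-2\int_0^1 h\sin\delta\,d\Theta .
\]
Here $h>0$ if the origin is the center, and $\delta\in(0,\pi/2]$ because the caustic parameter satisfies $\lambda(\rho)>0$ when $\rho>0$. So $\beta<0$ in that case too.

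Second, we need analyticity of $e\mapsto f(e)$ on $[0,1)$, and hence of $e\mapsto c/f(e)$ and of the family. This is the main obstacle. We plan to use Bialy's explicit description \cite{Bialyellipses}. For $\rho\in(0,1/2)$, the caustic parameter $\lambda$ is determined by the equation that $\rho$ equals a ratio of complete elliptic integrals of the first kind, whose modulus depends analytically on $(a,b,\lambda)$. One checks that the derivative with respect to $\lambda$ is nonzero; indeed the rotation number is strictly monotone in $\lambda$ (the twist property). The analytic implicit function theorem then gives $\lambda$ analytic in $e$. Substituting into the closed formula for $\beta$, which is a combination of elliptic integrals of the first and second kind with analytic dependence on $(a,b,\lambda)$, gives analyticity of $f$.

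The endpoints need separate care. At $\rho=1/2$, $\beta_{\mathscr E}(1/2)=-2a$ directly (the bouncing-ball orbit along the major axis), which is trivially analytic. At $e=0$, the formulas are even in $e$ and the disk case is regular, so the family extends analytically at $e=0$. If the elliptic integral route proves cumbersome, an alternative is to take $\tau=e$ in Proposition~\ref{prop:derivative_beta}, which exhibits $f'$ as an integral depending analytically on the parameters.
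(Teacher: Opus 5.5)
Your proposal is correct and follows essentially the paper's route. Homothety invariance ($\beta_{s\mathscr E}=s\beta_{\mathscr E}$, together with $\beta_{\mathscr E}(\rho)<0$) gives existence and uniqueness at each eccentricity. Analyticity rests on Bialy's analytic dependence of $\beta$ on the semi-axes, and your explicit formula $\mathscr E^\rho_e=(c/f(e))\mathscr F_e$ simply replaces the paper's implicit-function-theorem step on $B(a,b)=\beta-c$, whose nondegeneracy $DB\neq0$ is really just the scaling direction.

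One small caution: strict monotonicity of $\rho$ in $\lambda$ does not by itself give $\partial_\lambda\rho\neq0$, which the analytic implicit function theorem needs. Either check this nondegeneracy for elliptic billiards, or simply quote Bialy's analyticity of $\beta$ in $(a,b)$ as the paper does.
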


\begin{proof}
First note that given $\rho\in(0,1/2]$, two ellipses $\mathscr E$ and $\mathscr E'$ of the same eccentricity satisfy
\[
\beta_{\mathscr E}(\rho) = \beta_{\mathscr E'}(\rho)
\]
if and only if $\mathscr E=\mathscr E'$ up to translations and rotations. Indeed, since $\mathscr E$ and $\mathscr E'$ have the same eccentricity, up to isometries, they are homothetic to eachother. Note by $\lambda>0$ the ratio to pass from $\mathscr E$ to $\mathscr E'$. It follows that 
\[
\beta_{\mathscr E}(\rho) = \beta_{\mathscr E'}(\rho) = \lambda\beta_{\mathscr E}(\rho).
\]
And since $\beta_{\mathscr E}(\rho) \neq 0$, we conclude that $\lambda=1$.

By the same property of Mather's beta function on homothetic domains, we can easily see that for any $c<0$, any $\rho\in(0,1/2]$ and any $e\in[0,1)$, there is an ellipse $\mathscr E$ of eccentricity $e$ such that $\beta_{\mathscr E}(\rho) = c$. Hence given $\rho\in(0,1/2]$, $e\in[0,1)$ and $c<0$, there is a unique ellipse $\mathscr E_e$ of eccentricity $e$ such that $\beta_{\mathscr E}(\rho) = c$.

Let us show that the family $(\mathscr E_e)_{e\in[0,1)}$ is analytic. Given $a\geq b>0$ we write 
\[
B(a,b) = \beta_{\mathscr E}(\rho)-c.
\]
given an ellipse $\mathscr E$ of semi-axes $a$ and $b$. $B$ is analytic in $(a,b)$ (see \cite{Bialyellipses}) and by Proposition \ref{prop:derivative_beta}, $DB(a,b)\neq 0$ as a $1$-form. The result follows by the implicit function theorem.
\end{proof}

The following result will prove Bialy's conjecture.

\begin{proposition}
    Let $\rho_0,\rho_1\in(0,1/2]$ be distinct, and $c<0$. Consider the analytic family of ellipses $(\mathscr E_e^{\rho_0})_{e\in[0,1)}$ defined in Proposition \ref{prop:structure_ellipses_beta_fixed} such that $\beta_{\mathscr E_e^{\rho_0}}(\rho_0)=c$ for any $e\in[0,1)$. Then the map
    \[
    e\in[0,1)\mapsto\beta_{\mathscr E_e^{\rho_0}}(\rho_1)
    \]
    is strictly monotone.
\end{proposition}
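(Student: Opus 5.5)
The plan is to differentiate $e\mapsto\beta_{\mathscr E_e^{\rho_0}}(\rho_1)$ using Proposition \ref{prop:derivative_beta}, with $\tau=e$, and to show that this derivative never vanishes for $e\in(0,1)$. Strict monotonicity on $[0,1)$ then follows, since the derivative is continuous and nonvanishing on $(0,1)$, so it has constant sign there. Write $a=a_e$ and $b=b_e$ for the semi-axes of $\mathscr E_e^{\rho_0}$. Put $s=\sin^2\psi$ and
\[
f(s)=aa'(1-s)+bb's,\qquad w_k(s)=\frac{1}{\sqrt{(1-e^2s)(1+k^2s)}}.
\]
Assume first that $\rho_0,\rho_1<1/2$, and write $k_j=k_{e,\rho_j}$. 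By construction $\beta_{\mathscr E_e^{\rho_0}}(\rho_0)=c$ is constant in $e$. Since $C_{e,\rho_0}>0$, Proposition \ref{prop:derivative_beta} gives
\[
\int_0^{2\pi}f(\sin^2\psi)\,w_{k_0}(\sin^2\psi)\,d\psi=0 .
\]
Moreover $f$ is not identically zero: otherwise $a'=b'=0$, which contradicts the fact that $e$ varies along the family. Since $f$ is affine in $s$ and has zero integral against a positive weight, it changes sign exactly once, at some $s_*\in(0,1)$.

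Next I write the derivative at $\rho_1$ as
\[
\int f\,w_{k_1}\,d\psi=\int f\,w_{k_0}\bigl(r(s)-r(s_*)\bigr)\,d\psi,\qquad r(s)=\sqrt{\frac{1+k_0^2s}{1+k_1^2s}} .
\]
This identity holds because the added term $r(s_*)\int f\,w_{k_0}$ vanishes. If $k_0\neq k_1$, then $r$ is strictly monotone in $s$, so $f(s)\bigl(r(s)-r(s_*)\bigr)$ has constant sign and is nonzero on a set of positive measure. The integral is therefore nonzero, and after multiplying by $C_{e,\rho_1}>0$ the derivative is nonzero.

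It remains to check $k_0\neq k_1$ for $e\in(0,1)$. The map $\rho\mapsto\lambda(\rho)$ is strictly increasing: smaller confocal caustics correspond to larger rotation numbers, which is the standard monotonicity of the rotation number in the Joachimsthal invariant used in \cite{Bialyellipses}. In addition, $k^2=\frac{\lambda(a^2-b^2)}{a^2(b^2-\lambda)}$ is strictly increasing in $\lambda$ when $a>b$. At $e=0$ we get $k_0=k_1=0$ and the derivative vanishes, but this single point does not affect strict monotonicity.

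The case where one of the rotation numbers equals $1/2$ needs a separate treatment, since then $\lambda\to b^2$ and $k\to\infty$. In that case I use $\beta_{\mathscr E}(1/2)=-2a$ directly.
\begin{itemize}
\item If $\rho_1=1/2$, the derivative is $-2a'$, so its sign is that of $-f(0)=-aa'$. Here $f(0)\neq0$, because $s_*\in(0,1)$.
\item If $\rho_0=1/2$, then $a$ is constant along the family, so $a'=0$ and $f(s)=bb's$ with $b'\neq0$. Then $\int f\,w_{k_1}\neq0$ trivially.
\end{itemize}
Both cases are consistent with the formal limit $k\to\infty$, in which the ratio $w_\infty/w_{k_0}$ concentrates at $s=0$.

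The main obstacle I expect is rigor in the monotonicity of $\lambda$ in $\rho$ (strict, for $e>0$). I would first try to take it from Bialy's explicit elliptic-integral formula for $\rho(\lambda)$, whose derivative has a definite sign. Failing that, I would use the twist property: nested invariant curves have strictly ordered rotation numbers.
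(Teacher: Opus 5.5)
Your proof is correct and follows essentially the same route as the paper: you use the zero-derivative constraint at $\rho_0$, the strict monotonicity of the weight ratio $w_{k_1}/w_{k_0}$ via $k_0\neq k_1$, and the formula $\beta(1/2)=-2a$ for the endpoint case. Your single-sign-change argument for the affine $f$ (subtracting $r(s_*)\int f\,w_{k_0}=0$) is just another packaging of the paper's symmetrized double-integral Chebyshev argument. You are also somewhat more careful than the paper: you treat $\rho_1=1/2$ as well as $\rho_0=1/2$ (the paper only writes out the latter), and you note that at $e=0$, where $k_0=k_1=0$, the derivative vanishes, which does not affect strict monotonicity.
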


\begin{proof}
    We can assume that each $\mathscr E_e^{\rho_0}$ is described by the implicit equation
    \begin{equation}
\label{eq:ellipse_implicit_2}
\mathscr E_{e}^{\rho_0}:\quad 
\frac{x^2}{a_{e}^2}+\frac{y^2}{b_{e}^2}=1
\end{equation}
where $a_e\geq b_e>0$ depend analytically on $e$. 

Let us first assume that $\rho_0,\rho_1\neq 1/2$ so that each ellipse has invariant curves of rotation numbers $\rho_0$ and $\rho_1$. Hence the fact that $e\mapsto\beta_{\mathscr E_e^{\rho_0}}(\rho_0)$ is constant can be expressed using Proposition \ref{prop:derivative_beta} as
\[ 
\int_0^{2\pi}\frac{a_{e}a_{e}'\cos^2\psi+b_{e}b_{e}'\sin^2\psi}{\sqrt{(1-e^2\sin^2\psi)(1+k_{e,\rho_0}^2\sin^2\psi)}}d\psi=0
\]
which is equivalent to say that the non-zero vector $(a_{e}a_{e}',b_{e}b_{e}')$ is colinear to the vector $(-u,v)$ defined by
\[
u = \int_0^{2\pi}\frac{\sin^2\psi}{\sqrt{(1-e^2\sin^2\psi)(1+k_{e,\rho_0}^2\sin^2\psi)}}d\psi
\]
and
\[
v = \int_0^{2\pi}\frac{\cos^2\psi}{\sqrt{(1-e^2\sin^2\psi)(1+k_{e,\rho_0}^2\sin^2\psi)}}d\psi.
\]
Hence, again by Proposition \ref{prop:derivative_beta}, 
\begin{multline*}
\frac{d}{de}\beta_{\mathscr E_e^{\rho_0}}(\rho_1)
\propto
\int_0^{2\pi}\frac{a_{e}a_{e}'\cos^2\psi+b_{e}b_{e}'\sin^2\psi}{\sqrt{(1-e^2\sin^2\psi)(1+k_{e,\rho_1}^2\sin^2\psi)}}d\psi\\
\propto \int_0^{2\pi}\frac{v\sin^2\psi-u\cos^2\psi}{\sqrt{(1-e^2\sin^2\psi)(1+k_{e,\rho_1}^2\sin^2\psi)}}d\psi
\end{multline*}
where $\propto$ means equality up to a non-zero constant. Let us show that the latter expression does not vanish. Indeed, using the classical formula $\cos^2+\sin^2=1$ and factorizing by $u+v$ one gets
\begin{equation}
\label{eq:factorizing_dbeta}
\frac{d}{de}\beta_{\mathscr E_e^{\rho_0}}(\rho_1) 
\propto (v+u)\int_0^{2\pi}\frac{\sin^2\psi-\frac{u}{v+u}}{\sqrt{(1-e^2\sin^2\psi)(1+k_{e,\rho_1}^2\sin^2\psi)}}d\psi.
\end{equation}
Let us interpret this quantity using the functions $f_0$ and $f_1$
 defined by
 \[
 f_j(\psi) = \frac{1}{\sqrt{(1-e^2\sin^2\psi)(1+k_{e,\rho_j}^2\sin^2\psi)}},\qquad \psi\in\R,\,\, j=0,1
 \]
 associated to the measures $\mu_0$ and $\mu_1$ defined by $d\mu_j=f_j(\psi)d\psi$. Given a measurable function $g:\R\to\R$, we write, 
 \[
 \mu_j(g) = \int_0^{\frac{\pi}{2}}g(\psi)d\mu_j(\psi) = \int_0^{\frac{\pi}{2}}g(\psi)f_j\psi)d\psi.
 \]
 Using the fact that $\sin^2$ is even and $\pi$-periodic, Equation \eqref{eq:factorizing_dbeta} can be rewritten as  
 \[
\frac{d}{de}\beta_{\mathscr E_e^{\rho_0}}(\rho_1) 
\propto \mu_0(1)\mu_1(1)\left(
\frac{\mu_1(\sin^2)}{\mu_1(1)}-\frac{\mu_0(\sin^2)}{\mu_0(1)}
\right).
\]
which can be formulated using a double integral as follows:
\[
\frac{d}{de}\beta_{\mathscr E_e^{\rho_0}}(\rho_1) 
\propto
\int_0^{\frac{\pi}{2}}\int_0^{\frac{\pi}{2}}(\sin^2(y)-\sin^2(x))\left(\frac{f_1(y)}{f_0(y)}-\frac{f_1(x)}{f_0(x)}\right)f_0(x)f_0(y)dxdy.
\]
Now given two distinct $k_0, k_1> 0$, the map 
\[
\psi\in[0,\pi/2]\mapsto \sqrt{\frac{1+k_0^2\sin^2\psi}{1+k_1^2\sin^2\psi}}
\]
is strictly monotone as it is obtained by composing the square root with the strictly monotone map $\psi\mapsto \frac{1+k_0^2\sin^2\psi}{1+k_1^2\sin^2\psi} =\frac{k_0^2}{k_1^2}+ \frac{1-k_0^2k_1^{-2}}{1+k_1^2\sin^2\psi}$. As a consequence, the quotient $f_1/f_0$ is strictly monotone on $[0,\pi/2]$. Hence the quantity
\[
(\sin^2(y)-\sin^2(x))\left(\frac{f_1(y)}{f_0(y)}-\frac{f_1(x)}{f_0(x)}\right)
\]
has a constant sign on $[0,\pi/2]^2$. It follows that $\frac{d}{de}\beta_{\mathscr E_e^{\rho_0}}(\rho_1) =0$ if and only if 
\[
(\sin^2(y)-\sin^2(x))\left(\frac{f_1(y)}{f_0(y)}-\frac{f_1(x)}{f_0(x)}\right)f_0(x)f_0(y) = 0,
\]
identically, which is possible only if $f_1/f_0$ is constant, that is only when $k_{e,\rho_0}=k_{e,\rho_1}$. By the expression of $k_{e,\rho}$ given in Proposition \ref{prop:derivative_beta}, this can happen only when $\lambda(\rho_0)=\lambda(\rho_1)$, that is when $\rho_0=\rho_1$. But this is not the case, and this concludes that the map 
    \[
    e\in[0,1)\mapsto\beta_{\mathscr E_e^{\rho_0}}(\rho_1)
    \]
is strictly monotone. 
\vspace{0.2cm}

In the case when\footnote{Bialy \cite{Bialyellipses} proved the result in the case when $\rho_0=1/2$ and $\rho_1$ is rational. The following proof works also when $\rho_1$ is irrational.} one of the rotation numbers is $1/2$, say $\rho_0=1/2$, we can use \cite[Example 2 following Corollary 2.2]{Bialyellipses} which gives the formula
\[
\beta_{\mathscr E_{\mathscr E}^{\rho_0}}(1/2) = -2a_e,
\qquad e\in[0,1).
\]
Hence by assumption, $e\mapsto a_e$ is constant, and therefore $b_e=a_e\sqrt{1-e^2}$ is strictly decreasing, with $b_e'<0$. Since $\rho_1\neq 1/2$, we can apply Proposition \ref{prop:derivative_beta} to get
\[
\frac{d}{de}\beta_{\mathscr E_e^{\rho_0}}(\rho_1)
\propto
b_{e}b_{e}'\int_0^{2\pi}\frac{\sin^2\psi}{\sqrt{(1-e^2\sin^2\psi)(1+k_{e,\rho_1}^2\sin^2\psi)}}d\psi\neq 0
\]
and the result follows.
 \end{proof}

\section{Proof of Theorem \ref{thm:main_cst_perimeter}}

The proof relies on the same ideas as in the proof of Bialy's conjecture in Section \ref{sec:proof_Bialy_conj}, and we use here the same notations. Given $p>0$, we consider the analytic family of ellipses $(\mathscr E_e)_{e\in[0,1)}$ such that $\mathscr E_e$ is, up to isometries, the only ellipse of eccentricity $e$ and perimeter $p$.

Their common perimeter is given by the formula
\[
p = \int_0^{2\pi} h_e(\psi)d\psi.
\]
Differentating the latter in $e$ we obtain again that the non-zero vector $(a_{e}a_{e}',b_{e}b_{e}')$ is colinear to the vector $(-u,v)$ defined by
\[
u = \int_0^{2\pi}\frac{\sin^2\psi}{\sqrt{1-e^2\sin^2\psi}}d\psi
\]
and
\[
v = \int_0^{2\pi}\frac{\cos^2\psi}{\sqrt{1-e^2\sin^2\psi}}d\psi.
\]

Let us now fix an $\rho\in(0,1/2)$. Following the computations in Section \ref{sec:proof_Bialy_conj}, we can use Proposition \ref{prop:derivative_beta} to compute the derivative of $\beta_{\mathscr E_e}(\rho)$ and show that
\begin{multline}
\frac{d}{de}\beta_{\mathscr E_e}(\rho)
\propto
\int_0^{2\pi}\frac{a_{e}a_{e}'\cos^2\psi+b_{e}b_{e}'\sin^2\psi}{\sqrt{(1-e^2\sin^2\psi)(1+k_{e,\rho}^2\sin^2\psi)}}d\psi\\
\propto \int_0^{2\pi}\frac{v\sin^2\psi-u\cos^2\psi}{\sqrt{(1-e^2\sin^2\psi)(1+k_{e,\rho}^2\sin^2\psi)}}d\psi
\end{multline}
where $\propto$ means equality up to a non-zero constant. Now exactly in the same way as for the proof of Bialy's conjecture, one obtain
\[
\frac{d}{de}\beta_{\mathscr E_e}(\rho) 
\propto
\int_0^{\frac{\pi}{2}}\int_0^{\frac{\pi}{2}}(\sin^2(y)-\sin^2(x))\left(\frac{f_1(y)}{f_0(y)}-\frac{f_1(x)}{f_0(x)}\right)f_0(x)f_0(y)dxdy
\]
where the functions $f_0$ and $f_1$
are defined for $\psi\in\R$ by
 \[
 f_0(\psi) = \frac{1}{\sqrt{1-e^2\sin^2\psi}},\qquad
 f_1(\psi) = \frac{1}{\sqrt{(1-e^2\sin^2\psi)(1+k_{e,\rho}^2\sin^2\psi)}}.
 \]
 Note that this definition of $f_0$ corresponds to the definition of $f_0$ given in Section \ref{sec:proof_Bialy_conj} with $\rho_0=0$, since $k_{e,0}=0$. The same conclusion holds as $k_{e,\rho}\neq 0$: the derivative $\frac{d}{de}\beta_{\mathscr E_e}(\rho)$ cannot vanish as the function $f_1/f_0$ is not constant.

Hence the derivative of the analytic map $e\mapsto \beta_{\mathscr E_e}(\rho)$ never vanishes, hence is strictly monotone. Moreover by Theorem \ref{thm:bbs}, it reaches its maximum at $e=0$. hence the map is strictly decreasing, which proves the result when $\rho\neq 1/2$.
\vspace{0.2cm}

The case when $\rho=1/2$ is straightforward: if two ellipses $\mathscr E$ and $\mathscr E'$ of respective semi-axes $(a,b)$ and $(a',b')$ with $a\geq b$ and $a'\geq b'$ satisfy
\[
\beta_{\mathscr E}(1/2)=\beta_{\mathscr E'}(1/2),
\]
then $a=a'$ again by \cite[Example 2 following Corollary 2.2]{Bialyellipses}. Hence if they have the same perimeter, this forces $b'=b$ and the ellipses are identical.

\section{Local maximizers of $\beta$ - proof of Theorem \ref{thm:main_local_max}}

Fix $(\nu,\sigma)\in(0,1)\times(\frac{5}{2},+\infty)$. We define the set of \textit{$(\nu,\sigma)$-Diophantine numbers} by
\[
\mathcal D(\nu,\sigma) := \{ \rho\in (0,1/2) \,|\, \forall (m,n)\in\Z\times\Z_{>0}
\quad |n\rho-m|\geq \nu |m| n^{-\sigma}\}.
\]
Lazutkin \cite[Theorem 2]{Lazutkin_KAM} proved that there is $r>0$ such that given $C>0$ and any domain $\Omega$ of radius of curvature $\varrho$ satisfying\footnote{See \cite[Formula (1.7)]{Lazutkin_KAM}}
\[
\|\varrho\|_{\mathscr C^r}\leq C
\]
have the following property: 
there exists $b=b(C,\nu,\sigma)>0$ such that for any $\rho\in\mathcal{D}(\nu,\sigma)\cap(0,b)$, the billiard map in $\Omega$ admits an invariant curves of rotation number $\rho$. This curve admits a $\mathscr C^s$-smooth action-parametrization denoted by $\gamma_{\Omega,\rho}$, where $s=s(r,\nu\sigma)>0$ can be taken arbitrary large by increasing $r$ -- see \cite[Formula (1.6)]{Lazutkin_KAM}. Following the works of Poeschel \cite{Poeschel1, Poeschel2}, we can moreover assume that 
\[
\Omega\mapsto\gamma_{\Omega,\rho}
\]
is $\mathscr C^s$-smooth, where the topology on domains is induced by the Whitney topology on the set of radii of curvatures.
\vspace{0.2cm}

Define $r$ as given by Lazutkin theorem, corresponding to $s=1$, and we can assume that $r\geq 2$. Consider a bounded set $\mathscr S_r$ of domains with $\mathscr C^r$-smooth boundary. Let $C>0$ be such that $\|\varrho\|_{\mathscr C^r}\leq C$ for all radius of curvatures of domains in $\mathscr S_r$, and $b=b(C,\sigma,\nu)>0$ as given by Lazutkin's theorem.

Introduce $\mathscr R'$ as the set of $\rho$ which are in the set $\mathcal{D}(\nu,\sigma)\cap(0,b)$ and also satisfy
\begin{equation}
\label{eq:gutkin_nb}
\tan(n\rho) \neq n\tan(\rho),\qquad\forall n\in\Z\setminus\{0,\pm 1\}.
\end{equation}
This set has positive Lebesgue measure, as $\mathcal{D}(\nu,\sigma)\cap(0,b)$ has positive Lebesgue measure\footnote{See \cite[Formula (0.3)]{Lazutkin_KAM}}  and the set of $\rho$ satisfying \eqref{eq:gutkin_nb} is countable \footnote{See \cite[Corollary 2]{Gutkin}}. Moreover, if $\rho\in\mathscr R'$, no domains except disks can have an invariant curve of constant angle.
\vspace{0.2cm}

Let $\rho\in\mathscr R'$. Consider a strongly convex domain $\Omega$ in $\mathscr S_r$, fix an origin inside of $\Omega$ and a direction, and denote by $h$ the support function associated to $\Omega$ (see \cite{Bialyellipses}).

Consider any $\mathscr C^r$-smooth $1$-parameter family of maps $h_{\tau}:\R/2\pi\Z\to\R$ such that $h_0=h$ and 
\begin{equation}
\label{eq:def_same_perimeter}
\int_0^{2\pi}h_{\tau}(\psi)d\psi = \int_0^{2\pi}h(\psi)d\psi,
\qquad \forall\tau.
\end{equation}
For sufficiently small $\tau$, $h_{\tau}$ is the support function of a strongly convex domain $\Omega_{\tau}$ with $\mathscr C^r$-smooth boundary. By \eqref{eq:def_same_perimeter}, $\Omega_{\tau}$ has the same perimeter as $\Omega$. 
Moreover, by the aforementioned results of KAM type, it has an invariant curve of rotation number $\rho$ action-parametrized by $\gamma_{\tau}:\Theta\mapsto(\varphi_{\tau}(\Theta),p_{\tau}(\Theta))$, which is $\mathscr C^1$-smooth in $(\tau,\Theta)$.

If $\Omega$ is a local $\mathscr C^r$-maximizer or a local $\mathscr C^r$-minimizer of $\beta(\rho)$, then the map 
\[
\tau\mapsto\beta_{\Omega_{\tau}}(\rho)
\]
admits a local maximum or local minimum at $\tau=0$. By Proposition \ref{prop:derivative_beta}, it can be differentiated and we obtain
\[
\left.\frac{d}{d\tau}\right|_{\tau=0}\beta_{\Omega_{\tau}}(\rho) = \int_0^1\partial_{\tau}h_{\tau}(\psi_{\tau}(\Theta))\sin\delta_{\tau}(\Theta)d\Theta = 0
\]
where $\psi_{\tau}$ and $\delta_{\tau}$ are defined by .
Doing a change of coordinates $\psi=\psi_{\tau}(\Theta)$, we obtain the equation 
\begin{equation}
    \label{eq:first_deriv_vanish}
    \int_0^{2\pi}\left.\partial_{\tau}\right|_{\tau=0}h_{\tau}(\psi)\sin\delta_{\tau}(\psi)\Theta_{\tau}'(\psi)d\psi = 0
\end{equation}
where $\Theta_{\tau}$ denotes the inverse of $\psi_{\tau}$.
Since $(h_{\tau})_{\tau}$ can be any arbitrary $\mathscr C^r$-smooth one-parameter family of maps with a fixed average, Equation \eqref{eq:first_deriv_vanish} is equivalent to 
\[
    \int_0^{2\pi}n(\psi)\sin\delta_{0}(\psi)\Theta_{0}'(\psi)d\psi = 0
\]
for any function $n\in\mathscr C^r(\R/2\pi\Z)$ with zero average. As a consequence, one can find a constant $M\in\R$ such that
\begin{equation}
    \label{eq:first_trivialization}
    \sin\delta_{0}(\psi)\Theta_{0}'(\psi) = M,
    \qquad\forall\psi\in\R.
\end{equation}
Changing $\psi$ into $\psi_0(\Theta)$ in Equation \eqref{eq:first_trivialization}, we obtain
\begin{equation}
    \label{eq:second_trivialization}
    \sin\delta_{0}(\Theta)=M\psi_{0}'(\Theta),
    \qquad\forall\Theta\in\R.
\end{equation}
We can compute $M$ explicitely by integrating Equation \eqref{eq:second_trivialization}:
\[
M = \frac{\int_0^1\sin\delta_0(\Theta)d\Theta}{\int_0^1\psi_0'(\Theta)d\Theta} = \frac{\int_0^1\sin\delta_0(\Theta)d\Theta}{2\pi}.
\]
Therefore by Lemma \ref{lemma:Mather_beta}, 
\[
\beta_{\Omega}(\rho) = -2\int_0^{2\pi} h(\psi)\sin\delta_0(\psi)\Theta_0'(\psi)d\psi = -2M\int_0^{2\pi}h(\psi)d\psi.
\]
But since $\int_0^{2\pi}h(\psi)d\psi=|\partial\Omega|$ and using the expression of $M$ we obtain
\begin{equation}
\label{eq:nice_beta}
\beta_{\Omega}(\rho) = -2\frac{|\partial\Omega|}{2\pi}\int_0^1\sin\delta_0(\Theta)d\Theta.
\end{equation}
By Theorem \ref{thm:bbs}, 
\[
\beta_{\Omega}(\rho) \leq \frac{|\partial\Omega|}{2\pi}\beta_{\D}(\rho).
\]
Hence combining Equation \eqref{eq:nice_beta} with the expression $\beta_{\D}(\rho)=-2\sin(\pi\rho)$, we finally obtain
\[
    \sin(\pi\rho)\leq \int_0^1\sin\delta_0(\Theta)d\Theta.
\]
Using Jensen's inequality, 
\begin{equation}
\label{eq:jensen}
    \sin(\pi\rho)\leq \int_0^1\sin\delta_0(\Theta)d\Theta \leq \sin\left(\int_0^1\delta_0(\Theta)d\Theta\right).
\end{equation}
We can now compute $\int_0^1\delta_0(\Theta)d\Theta$ using the relation
\[
\psi_0(\Theta+\rho)-\psi_0(\Theta) = \delta_0(\Theta)+\delta_0(\Theta+\rho),
\qquad \Theta\in\R.
\]
Integrating on both sides we obtain
\[
\int_0^1\psi_0(\Theta+\rho)d\Theta-\int_0^1\psi_0(\Theta)d\Theta = 2\int_0^1\delta_0(\Theta)d\Theta 
\]
where the right handside is obtained doing a change of variable and using the $1$-periodicity of $\delta_0$. Moreover the same change of variable together with the fact that $\psi_0$ satisfies
\[
\psi_0(\Theta+1) = \psi_0(\Theta)+2\pi
\]
gives
\[
2\pi\rho = 2\int_0^1\delta_0(\Theta)d\Theta
\]
and hence
\[
\int_0^1\delta_0(\Theta)d\Theta = \pi\rho.
\]
Using the result in \eqref{eq:jensen}, we obtain the inequalities
\[
    \sin(\pi\rho)\leq \int_0^1\sin\delta_0(\Theta)d\Theta \leq \sin\left(\int_0^1\delta_0(\Theta)d\Theta\right) = \sin(\pi\rho).
\]
Hence equality is achieved in this application of Jensen's inequality, and therefore $\delta_0$ is constant. As a consequence $\Omega$ has a curve of constant angle associated to the rotation number $\rho$.
By construction of $\mathscr R'$ and since $\rho\in\mathscr R'$, disks are the only such billiards with this property. Hence $\Omega$ can only be a disk, that is a local maximizer as a consequence of Theorem \ref{thm:bbs}, which completes the proof.

\end{document}